\tikzstyle arrowstyle=[scale=1]
\tikzstyle directed=[postaction={decorate,decoration={markings,
    mark=at position .65 with {\arrow[arrowstyle]{stealth}}}}]
\tikzstyle reverse directed=[postaction={decorate,decoration={markings,
    mark=at position .45 with {\arrowreversed[arrowstyle]{stealth};}}}]
\newtheorem{thm}{Theorem}[section]
\newtheorem{lemma}[thm]{Lemma}
\newtheorem{cor}[thm]{Corollary}
\newtheorem{prop}[thm]{Proposition}
\begin{document}

\bigskip
\begin{center}
\Large
On transitivity and (non)amenability of $\operatorname{Aut} F_n$ actions on group presentations
\normalsize

\bigskip

Aglaia Myropolska and Tatiana Nagnibeda\footnote{This research was partly funded by the Swiss National Science Foundation, grant 200021\_144323.}
\bigskip
\end{center}

\begin{center}
{\it For Pierre de la Harpe on the occasion of his 70-th birthday, with great respect}
\end{center}
\bigskip

\begin{center}
\textbf{Abstract} \end{center} 
For a finitely generated group $G$ the Nielsen graph $N_n(G)$, $n\geq \operatorname{rank}(G)$, describes the action of the group $\operatorname{Aut}F_n$ of automorphisms of the free group $F_n$ on generating $n$-tuples of G by elementary Nielsen moves. The question of (non)amenability of Nielsen graphs is of particular interest in relation with the open question about Property $(T)$ for $\operatorname{Aut}F_n$, $n\geq 4$. We prove nonamenability of Nielsen graphs $N_n(G)$ for all $n\ge \max\{2,\operatorname{rank}(G)\}$ when $G$ is indicable, and for $n$ big enough when $G$ is elementary amenable. We give an explicit description of $N_d(G)$ for relatively free (in some variety) groups of rank $d$ and discuss their connectedness and nonamenability. Examples considered include free polynilpotent groups and free Burnside groups.

\bigskip

\section{Introduction}
Let $G$ be a finitely generated group. The following transformations of the set $G^n, n\geq 1$, were introduced by J.\ Nielsen in \cite{Ni18} and are known as \emph{elementary Nielsen moves}:
\begin{align*}
R_{ij}^{\pm}(g_1,\dots,g_i,\dots, g_j,\dots, g_n)&=(g_1,\dots,g_{i}g_j^{\pm 1},\dots,g_j,\dots, g_n),\\
L^{\pm}_{ij}(g_1,\dots,g_i,\dots, g_j,\dots, g_n)&=(g_1,\dots,g_j^{\pm 1}g_{i},\dots,g_j,\dots, g_n),\\
 I_j(g_1,\dots,g_j,\dots,g_n)&=(g_1,\dots,g_j^{-1},\dots,g_n),\\
\end{align*}
where $1\leq i,j \leq n$, $i\neq j$.  These transformations can be seen as elements of $\operatorname{Aut}F_n$; moreover, Nielsen proved that they generate $\operatorname{Aut}F_n$ (see e.g. \cite{LS}, Chap.\ I, Prop.\ 4.1). 
Hence elementary Nielsen moves transform generating sets of $G$ into generating sets. Two generating sets $U$ and $V$ are called \emph{Nielsen equivalent} ($U\sim V$) if one is obtained from the other by a finite chain of elementary Nielsen moves. 

\emph{The rank $\operatorname{rank}(G)$} of a group $G$ is the minimal number of generators of $G$. 
\medskip
We define the \emph{Nielsen graph} (also called the extended product replacement graph)  $N_n(G)$, $n\geq \operatorname{rank}(G)$, as follows: \begin{itemize} \item[-] the set of vertices consists of generating $n$-tuples in $G$, \begin{equation*}V(N_n(G))=\{(g_1,\dots,g_n)\in G^n\mid \langle g_1,\dots,g_n \rangle = G\};\end{equation*}
\item[-]  for every generating $n$-tuple $(g_1, \dots, g_n)$ and for every $(i,j)$, $1\leq i,j\leq n$, $i\neq j$, there is an edge corresponding to each of the elementary Nielsen moves $R_{ij}^{\pm}(g_1,\dots,g_n)$, $L_{ij}^{\pm}(g_1,\dots,g_n)$, $I_j(g_1,\dots,g_n)$.
\end{itemize}

The set $G^n$ of $n$-tuples in $G$ can be identified with the set of homomorphisms from the free group $F_n$ to $G$, and the set of generating $n$-tuples is then identified with the set of epimorphisms $\operatorname{Epi}(F_n,G)$. Hence there are natural actions of the automorphism group $\operatorname{Aut}F_n$ on both $G^n$ and $\operatorname{Epi}(F_n,G)$, by precomposition. 
Observe that the graph $N_n(G)$ is connected if and only if the action of $\operatorname{Aut}F_n$ on $\operatorname{Epi}(F_n,G)$ is transitive.  

Recall that, for a given group $G$ generated by a finite set $S$, and a set $M$ with a transitive action of $G$ on $M$, one can define the Schreier graph $Sch(G,M,S)$: the vertex set of the graph is $M$, and there is an edge connecting $m_1$ to $m_2$ for each $s\in S\cup S^{-1}$ that maps $m_1$ to $m_2$. Hence, if the action of $\operatorname{Aut}F_n$ on $\operatorname{Epi}(F_n,G)$ is transitive, then $N_n(G)$ is precisely the Schreier graph of $\operatorname{Aut}F_n$ acting on $\operatorname{Epi}(F_n,G)$ with respect to the elementary Nielsen moves. The set $\operatorname{Epi}(F_n,G)$ can also be understood as the set of left cosets of the subgroup $St_{\operatorname{Aut}F_n}(g_1,\dots,g_n)$ for some (any) generating $n$-tuple $(g_1,\dots,g_n) \in G^n$, and $N_n(G)$ is thus the Schreier graph with respect to this subgroup in $\operatorname{Aut}F_n$.
More generally, if the action is not transitive, every connected component of $N_n(G)$ is the Schreier graph of 
$\operatorname{Aut}F_n$ with respect to the corresponding subgroup $St_{\operatorname{Aut}F_n}(g_1,\dots,g_n)$, where the generating $n$-tuple $(g_1,\dots,g_n)$ belongs to the considered connected component.
As any Schreier graph, $N_n(G)$ comes with an orientation and a labeling of edges by elements of the generating set. The set of elementary Nielsen moves being symmetric, orientation can be disregarded in this case.

The question of (non)amenability of infinite Nielsen graphs is of particular interest in relation with the open problem about Property $(T)$ for $\operatorname{Aut}F_n$, $n\geq 4$ \cite{LubP} (the answer is negative for $n\leq 3$, see \cite{GrLu}). Namely, if a group $G$ has Property $(T)$ then $G$ does not admit any faithful amenable transitive action on an infinite countable set $X$, in other words, every infinite Schreier graph of $G$ is nonamenable. This follows from the well-known amenability criterion in terms of existence of almost invariant vectors for the action of $G$ on $l^{2}(X)$ (see Chapter G in \cite{BHVa} for a proof in the case $X=G$).

Connectedness of Nielsen graphs has been studied in different contexts, for finite and infinite groups, see \cite{Ev06,Lub,Myro,Pak} and references therein.
Even such basic questions about structure and geometry of Nielsen graphs as the number of connected components of $N_n(G)$, whether these connected components are all isomorphic, whether they are all infinite,  and so on, remain widely open. 

A locally finite connected graph $X$ of uniformly bounded degree is \emph{amenable} if either $X$ is finite or 
 $$h(X):=\inf_{S\subset V(X)}\frac{|\partial_X(S)|}{|S|}=0,$$ where the infimum is taken over all finite nonempty subsets $S$ of the set of vertices $V(X)$ and $\partial_X(S)$ is the set of all edges connecting $S$ to its complement. The number $h(X)\geq 0$ is called the {\it isoperimetric constant} (or the Cheeger constant) of $X$. A graph with several connected components is amenable if at least one of the connected components is amenable.

We will also use Kesten characterization of amenable graphs (see e.g. \ \cite[10.3]{Woes} for the extension of Kesten's criterion of amenability to all connected regular graphs). A connected $m$-regular  graph $X$ is \emph{amenable} if and only if $\rho(X)=1$, where $\rho(X)=1/m \limsup_{k\rightarrow\infty}a_k^{1/k}\leq 1$  is the \emph{spectral radius} of $X$, with  $a_k(x)$ denoting the number of closed paths of length $k$ in $X$, based at some (any) vertex of $X$. 

In this paper, we study connectedness and nonamenability of Nielsen graphs for certain families of groups. In Section \ref{Nonamenability of Nielsen graphs of indicable groups} we discuss in detail the structure of Nielsen graphs $N_n(\mathbb Z)$, $n\geq 1$, which allows us to deduce nonamenability of all Nielsen graphs $N_n(G)$, $n\geq \max\{2, \operatorname{rank}(G)\}$, for finitely generated groups $G$ that admit an epimorphism onto $\mathbb Z$ (such groups are called {\it indicable}).

\begin{thm}
Let G be a finitely generated indicable group. Then all Nielsen graphs $N_n(G)$, $n\geq \max\{2,\operatorname{rank}(G)\}$, are nonamenable.
\label{indicable.amen}
\end{thm}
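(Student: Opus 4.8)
The plan is to reduce the statement to the single case $G=\mathbb Z$ and then transfer nonamenability along the defining epimorphism. Fix an epimorphism $\phi\colon G\twoheadrightarrow\mathbb Z$, which exists since $G$ is indicable. Applying $\phi$ coordinatewise gives $\Phi\colon G^n\to\mathbb Z^n$, $\Phi(g_1,\dots,g_n)=(\phi(g_1),\dots,\phi(g_n))$, which sends generating $n$-tuples of $G$ to generating $n$-tuples of $\mathbb Z$. Because $\phi$ is a homomorphism and $\mathbb Z$ is abelian, $\phi(g_ig_j^{\pm1})=\phi(g_i)\pm\phi(g_j)$ and $\phi(g_j^{-1})=-\phi(g_j)$, so $\Phi$ commutes with every elementary Nielsen move. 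Equivalently, post-composition with $\phi$ defines an $\operatorname{Aut}F_n$-equivariant map $\operatorname{Epi}(F_n,G)\to\operatorname{Epi}(F_n,\mathbb Z)$, hence a graph morphism $\Phi\colon N_n(G)\to N_n(\mathbb Z)$.

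Next I would run the transfer argument. The action of $\operatorname{GL}_n(\mathbb Z)$ on primitive integer vectors is transitive, so $N_n(\mathbb Z)$ is connected; it then suffices to prove that $N_n(\mathbb Z)$ is nonamenable and that this forces \emph{every} connected component of $N_n(G)$ to be nonamenable (recall that a graph is amenable as soon as one of its components is). Each component $C$ of $N_n(G)$ is an $\operatorname{Aut}F_n$-orbit $\operatorname{Aut}F_n/H$, and by equivariance $\Phi$ maps $C$ onto the orbit $\Phi(C)$, a component of $N_n(\mathbb Z)$, i.e. all of $N_n(\mathbb Z)$. Since $\operatorname{Aut}F_n$ is finitely generated by the Nielsen moves, amenability of $C$ is equivalent to the existence of an $\operatorname{Aut}F_n$-invariant mean on $C$; pushing such a mean forward along the equivariant surjection $\Phi|_C$ yields an invariant mean on $\Phi(C)=N_n(\mathbb Z)$. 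Thus if $N_n(\mathbb Z)$ is nonamenable, no component $C$ can be amenable, and $N_n(G)$ is nonamenable.

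It remains to prove that $N_n(\mathbb Z)$ is nonamenable for every $n\ge2$. The action of $\operatorname{Aut}F_n$ here factors through the abelianization $\operatorname{Aut}F_n\to\operatorname{GL}_n(\mathbb Z)$, under which the Nielsen generators map onto the elementary generators, so $N_n(\mathbb Z)$ is exactly the Schreier graph of $\operatorname{GL}_n(\mathbb Z)$ acting on the infinite set of primitive vectors. For $n\ge3$ the subgroup $\operatorname{SL}_n(\mathbb Z)$ has Kazhdan's Property $(T)$, hence so does $\operatorname{GL}_n(\mathbb Z)$, and by the criterion recalled in the introduction an infinite Schreier graph of a Property $(T)$ group is nonamenable.

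The case $n=2$ is the main obstacle, since $\operatorname{GL}_2(\mathbb Z)$ has no Property $(T)$, and must be handled by analysing $N_2(\mathbb Z)$ directly. Subtractive Euclidean reduction organizes the primitive vectors in each open sign-quadrant into a rooted binary tree: in the positive quadrant, from $(a,b)$ with $a,b>0$, $\gcd(a,b)=1$ and $(a,b)\neq(1,1)$, exactly one of $(a,b)\mapsto(a-b,b)$, $(a,b)\mapsto(a,b-a)$ stays in the quadrant (reduction towards the root $(1,1)$), while the additive moves $(a,b)\mapsto(a+b,b)$ and $(a,b)\mapsto(a,a+b)$ produce its two children. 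These edges, one such tree per quadrant together with the four axis vectors $(\pm1,0),(0,\pm1)$ attached near the roots, form a spanning subgraph $T$ of $N_2(\mathbb Z)$ consisting of four rooted infinite binary trees glued along a finite core. Since an infinite rooted binary tree has positive Cheeger constant and removing edges (passing to the spanning subgraph $T$) can only decrease boundaries, we get $h(N_2(\mathbb Z))\ge h(T)>0$, so $N_2(\mathbb Z)$ is nonamenable. The remaining, essentially routine, verification is that $T$ really is such a disjoint family of binary trees and that its isoperimetric constant is bounded away from $0$; alternatively one may invoke the Kesten spectral-radius criterion on this tree-like structure.
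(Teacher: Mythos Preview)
Your proof is correct and follows the same strategy as the paper: reduce to $N_n(\mathbb{Z})$ via the epimorphism $G\twoheadrightarrow\mathbb{Z}$, lift nonamenability back along this map, and prove $N_n(\mathbb{Z})$ is nonamenable by exhibiting a spanning forest of binary trees indexed by sign patterns. The only differences are cosmetic: the paper phrases the transfer as a graph covering together with the spectral-radius criterion (Lemmas~\ref{Pasc} and~\ref{cover}) rather than as a pushforward of invariant means, and it treats all $n\ge 2$ by a single subforest construction (Proposition~\ref{nonamen}) rather than invoking Property~(T) of $\operatorname{GL}_n(\mathbb{Z})$ for $n\ge 3$---an alternative the paper itself records in the remark following that proposition.
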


 In Section \ref{elementary amenable groups} we discuss nonamenability of Nielsen graphs for infinite finitely generated elementary amenable groups. In particular we describe in detail the structure of all Nielsen graphs of the infinite dihedral group. We also show:

\begin{thm}
Let $G$ be an infinite finitely generated elementary amenable group. Then $G$ admits an epimorphism onto a group $H$ that contains a normal subgroup isomorphic to $\mathbb{Z}^d$, $d\geq 1$, of finite index $i\geq 1$. All Nielsen graphs $N_n(G)$ are nonamenable for $n\geq \operatorname{rank}(G)+\log_2 i +1$.
\label{elemamen}
\end{thm}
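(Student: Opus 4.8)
The plan is to reduce Theorem~\ref{elemamen} to Theorem~\ref{indicable.amen} by exhibiting a suitable indicable quotient and then exporting nonamenability along epimorphisms. First I would establish the structural claim: for an infinite finitely generated elementary amenable group $G$, there is an epimorphism $\pi\colon G\twoheadrightarrow H$ where $H$ contains a normal copy of $\mathbb{Z}^{d}$, $d\geq 1$, of finite index $i$. This should follow from the known structure theory of elementary amenable groups (in the spirit of work of Chou, Hillman, and others): an infinite finitely generated elementary amenable group has a nontrivial finite-by-(torsion-free abelian) or virtually-$\mathbb{Z}^{d}$ quotient. Concretely, one uses that such a group is either virtually solvable of finite Hirsch length or maps onto a group with a finite-index $\mathbb{Z}^{d}$; the key point is that $H$ is virtually $\mathbb{Z}^d$, hence indicable (it surjects onto $\mathbb{Z}$ after passing to the abelianization of a finite-index $\mathbb{Z}^d$ subgroup, or directly since virtually-$\mathbb{Z}^d$ groups with $d\ge 1$ admit an infinite abelianization modulo torsion and therefore map onto $\mathbb{Z}$).

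Next I would pass from $G$ to $H$ at the level of Nielsen graphs. Since $\pi$ is an epimorphism, any generating $n$-tuple of $G$ maps to a generating $n$-tuple of $H$, and elementary Nielsen moves commute with applying $\pi$ coordinatewise. This yields a graph morphism $N_n(G)\to N_n(H)$ that is a covering onto its image (locally, the Nielsen-move neighbours upstairs map bijectively to those downstairs). The standard principle here is that amenability passes to covered graphs: if $N_n(H)$ were amenable on the relevant component, so would be its cover $N_n(G)$; contrapositively, nonamenability of $N_n(H)$ forces nonamenability of $N_n(G)$. So it suffices to prove $N_n(H)$ is nonamenable for $n$ in the stated range.

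The remaining and main obstacle is bookkeeping the rank and the index $i$ to land in the range $n\geq \operatorname{rank}(G)+\log_2 i+1$. The group $H$ is virtually $\mathbb{Z}^{d}$, hence indicable, so Theorem~\ref{indicable.amen} gives nonamenability of $N_m(H)$ for all $m\geq\max\{2,\operatorname{rank}(H)\}$. The issue is controlling $\operatorname{rank}(H)$ in terms of $\operatorname{rank}(G)$ and $i$: although $\operatorname{rank}(H)\leq\operatorname{rank}(G)$ since $H$ is a quotient, we also need the Nielsen graph $N_n(H)$ to be reached for the specific $n$ coming from $G$, and the $\log_2 i$ term should arise from estimating how many extra generators are needed to realize a finite-index $\mathbb{Z}^d$ inside $H$ via generating tuples compatible with Nielsen moves. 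I expect the $\log_2 i$ correction to come from a counting or covering argument: a finite-index subgroup of index $i$ in a $d$-generated group can be generated by roughly $\operatorname{rank}+\log_2 i$ elements (a Reidemeister--Schreier/Nielsen-type bound), and one needs the generating tuple of $H$ to survive into the indicable-quotient regime where Theorem~\ref{indicable.amen} applies. The technical heart is therefore verifying that for $n\geq\operatorname{rank}(G)+\log_2 i+1$ the image tuples genuinely generate $H$ and lie in a component to which the indicable nonamenability result applies; once this is checked, the covering argument of the previous paragraph completes the proof.
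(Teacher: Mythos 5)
Your reduction to Theorem \ref{indicable.amen} breaks down at the claim that a virtually-$\mathbb{Z}^d$ group $H$ is indicable. This is false: $D_{\infty}$ contains $\mathbb{Z}$ as a normal subgroup of index $2$, yet its abelianization is $\mathbb{Z}/2\mathbb{Z}\times\mathbb{Z}/2\mathbb{Z}$, so it admits no epimorphism onto $\mathbb{Z}$. Surjecting a finite-index subgroup onto $\mathbb{Z}$ does not produce a surjection of $H$ onto $\mathbb{Z}$, and ``infinite abelianization modulo torsion'' simply fails here. The failure is not repairable by a cleverer reduction: the paper proves that $N_2(D_{\infty})$ is quasi-isometric to a line, hence \emph{amenable}, so no argument can give nonamenability of $N_n(H)$ for all $n\geq\max\{2,\operatorname{rank}(H)\}$ in the virtually-$\mathbb{Z}^d$ case. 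This is exactly why the statement carries the threshold $n\geq\operatorname{rank}(G)+\log_2 i+1$ and why the paper proves a separate Proposition \ref{finiteindex} instead of invoking Theorem \ref{indicable.amen}. In that proposition the $\log_2 i$ term has a different origin than the Reidemeister--Schreier-type bound you guess at: it comes from Pak's result that the Nielsen graph of the finite quotient $F=H/Q$ of order $i$ is connected once $n\geq\operatorname{rank}(F)+\log_2 i$, which lets one Nielsen-reduce every generating $n$-tuple of $H$ to a normal form whose tail lies in $Q\cong\mathbb{Z}^d$; one then finds a coarsely dense union of copies of $N_{n-r}(\mathbb{Z}^s)$ (nonamenable by Proposition \ref{nonamen} and Lemma \ref{cover}) and concludes with Lemma \ref{subforest}.

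A second, smaller gap is the structural claim itself. You assert as ``known structure theory'' that an infinite finitely generated elementary amenable group maps onto a virtually-$\mathbb{Z}^d$ group, but this is the substantive content of the first half of the theorem, and the dichotomy you sketch (virtually solvable of finite Hirsch length versus the rest) is not a correct or citable statement in that form. The paper derives the claim by passing to a just-infinite quotient, invoking Grigorchuk's trichotomy (Theorem \ref{justinfinite}), ruling out the branch case (Proposition \ref{branch}, which the authors must prove since no proof exists in the literature) and the simple case, and then using de Cornulier's classification of elementary amenable hereditarily just-infinite groups as $\mathbb{Z}$ or $D_{\infty}$. Your covering step (Lemmas \ref{Pasc} and \ref{cover}) is fine and matches the paper, but without Proposition \ref{finiteindex} and the just-infinite reduction the proof does not go through.
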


Related results in this direction appear also in a recent preprint \cite{Maly} by Malyshev.

In Section \ref{Nielsen graphs of relatively-free groups} we consider Nielsen graphs of relatively free groups.
 A group is called \textit{relatively free} if it is free in a variety of groups (see Section \ref{Nielsen graphs of relatively-free groups} for a more detailed definition). For a relatively free group $G$ of rank $d$ we describe explicitly the Nielsen graph $N_d(G)$ (Theorem \ref{rel.free}). 
In particular, we show that every connected component of the Nielsen graph is isomorphic to the Cayley graph of the subgroup $T(G)\leq \operatorname{Aut} G$ of {\it tame automorphisms} of $G$. (An automorphism of a relatively free group $G$ of rank $d$ is \emph{tame} if it lies in the image of the natural homomorphism $\operatorname{Aut} F_d\rightarrow \operatorname{Aut} G$ -- see Section \ref{Nielsen graphs of relatively-free groups} for the precise definition of tameness). This implies in particular that all connected components of the Nielsen graph $N_d(G)$ are isomorphic. Their number is equal to the index of the subgroup $T(G)$ in $\operatorname{Aut} G$. We deduce the following criteria.
\begin{cor}
Let $G$ be a relatively free group of rank $d$. Then
\begin{enumerate}
\item The Nielsen graph $N_d(G)$ is connected if and only if all automorphisms of $G$ are tame;
\item $N_d(G)$ is nonamenable if and only if the group $T(G)$ of tame automorphisms of $G$ is nonamenable.
\end{enumerate}
\label{rel.free.amen}
\end{cor}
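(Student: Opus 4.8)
The plan is to read both statements off the structure theorem for $N_d(G)$ (Theorem \ref{rel.free}), which supplies three facts that I will use as black boxes: every connected component of $N_d(G)$ is isomorphic, as a graph, to the Cayley graph of $T(G)$ with respect to the finite symmetric generating set obtained as the image of the elementary Nielsen moves under the natural map $\operatorname{Aut} F_d\rightarrow\operatorname{Aut} G$; all connected components are pairwise isomorphic; and the number of connected components equals the index $[\operatorname{Aut} G:T(G)]$. (Note that the set of elementary Nielsen moves is symmetric, as observed in the introduction, so its image is a symmetric generating set of $T(G)$.)

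For part (1), I would observe that $N_d(G)$ is connected precisely when it has a single connected component. By the third fact above the number of components equals $[\operatorname{Aut} G:T(G)]$, so $N_d(G)$ is connected if and only if this index equals $1$, i.e. $T(G)=\operatorname{Aut} G$. Since $T(G)$ is by definition the image of $\operatorname{Aut} F_d\rightarrow\operatorname{Aut} G$, this is exactly the assertion that every automorphism of $G$ is tame. Equivalently, one may phrase this through the transitivity criterion recorded in the introduction: $N_d(G)$ is connected iff $\operatorname{Aut} F_d$ acts transitively on $\operatorname{Epi}(F_d,G)$, and two generating $d$-tuples lie in the same orbit iff the induced automorphisms of $G$ differ by a tame one.

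For part (2), I would start from the convention adopted in the introduction: a graph with several components is amenable if and only if at least one of its components is amenable; equivalently, $N_d(G)$ is nonamenable if and only if \emph{every} component of $N_d(G)$ is nonamenable. Since all components are isomorphic, this reduces to the nonamenability of a single component, which by the first fact is the Cayley graph of $T(G)$. It then remains to invoke the standard equivalence between amenability of a finitely generated group and amenability of its Cayley graph with respect to a finite symmetric generating set, via Følner sets or via the Kesten criterion recalled in the introduction; this equivalence is independent of the chosen finite generating set. Hence the component is nonamenable iff $T(G)$ is nonamenable, which is the claim. This argument is insensitive to whether the number of components $[\operatorname{Aut} G:T(G)]$ is finite or infinite, since all components coincide up to isomorphism with one fixed graph.

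The only points requiring care are bookkeeping rather than conceptual, and constitute the closest thing to an obstacle here: one must confirm that the generating set of $T(G)$ produced by Theorem \ref{rel.free} is indeed finite and symmetric, so that the Cayley graph has uniformly bounded degree and the graph-theoretic notion of amenability from the introduction applies; and one must ensure that passing between $T(G)$ and its Cayley graph preserves amenability, which is exactly where the Følner/Kesten equivalence enters. Both are routine once Theorem \ref{rel.free} is available, so the substantive work lies entirely in that theorem and the corollary is a direct read-off.
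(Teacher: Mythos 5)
Your proposal is correct and follows essentially the same route as the paper, which deduces the corollary directly from Theorem \ref{rel.free}: part (1) from the count of components being the index $[\operatorname{Aut}G:T(G)]$, and part (2) from each component being $Cay(T(G),S)$ together with the standard equivalence between amenability of a group and of its Cayley graph. The paper states this deduction without elaboration; your added bookkeeping (symmetry and finiteness of $S$, the Kesten/F\o lner equivalence, and the convention that a disconnected graph is nonamenable iff every component is) is exactly the right filling-in.
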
 

We then use these criteria to examine Nielsen graphs of various classes of relatively free groups, in Section \ref{Examples}. We first consider free polynilpotent groups. All such groups  are indicable, so their Nielsen graphs are nonamenable by Theorem \ref{indicable.amen}.
The question about connectedness of Nielsen graphs is more complicated in this class and we examine it case by case. 

We then turn our attention to free Burnside groups. Recall that the free Burnside group $B(d,m)$ of rank $d$ and exponent $m$ is the group on $d$ generators satisfying the law $x^m=1$. These groups are torsion and thus  cannot be indicable. By a famous result of Novikov and Adyan \cite{NA68} we know that for any $d\geq 2$ and $m$ odd and large enough these groups are infinite. Adyan further showed \cite{Ad82} that $B(d,m)$ are nonamenable for any $d\geq 2$ and odd $m\geq 665$. Hence, our Theorems \ref{indicable.amen} and \ref{elemamen} are not applicable in this case. Nonamenability of $N_n(B(d,m))$ for all $n\geq d\geq 3$ and $m$ odd and large enough is proven by Malyshev \cite{Maly} using uniform nonamenability of $B(d,m)$.
 Using the work of Coulon \cite{Coul} on automorphisms of free Burnside groups, as well as some results of Moriah and Shpilrain \cite{MoSh} we deduce from Corollary \ref{rel.free.amen}:

\begin{cor}
Let $B(d,m)$ denote the free Burnside group on $d$ generators of exponent $m$. If $d\geq 2$ and $m> 2^d$ then the Nielsen graph $N_d(B(d,m))$ is not connected. For $d\geq 3$ and $m$ odd and large enough all connected components of $N_d(B(d,m))$ are isomorphic and nonamenable.
\label{nonamen.Burn.}
\end{cor}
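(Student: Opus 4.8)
The plan is to read off both statements from Corollary \ref{rel.free.amen} together with the explicit description in Theorem \ref{rel.free}, using that $B(d,m)$ is by definition the free group of rank $d$ in the variety of groups of exponent $m$, hence relatively free of rank $d$. Everything then reduces to two questions about the group $T(B(d,m))$ of tame automorphisms: whether it equals $\operatorname{Aut}(B(d,m))$, and whether it is nonamenable. Indeed, by Theorem \ref{rel.free} the components of $N_d(B(d,m))$ are all isomorphic to the Cayley graph of $T(B(d,m))$, and their number is the index $[\operatorname{Aut}(B(d,m)):T(B(d,m))]$.

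For the non-connectedness in the regime $d\geq 2$, $m>2^d$, I would apply Corollary \ref{rel.free.amen}(1): $N_d(B(d,m))$ is connected if and only if every automorphism of $B(d,m)$ is tame. It therefore suffices to exhibit a single non-tame automorphism, and this is exactly what the results of Moriah--Shpilrain \cite{MoSh} provide once $m>2^d$ (a counting argument showing that the tame automorphisms cannot exhaust $\operatorname{Aut}(B(d,m))$). Feeding such an automorphism into Corollary \ref{rel.free.amen}(1) yields non-connectedness; equivalently, by Theorem \ref{rel.free} the index $[\operatorname{Aut}(B(d,m)):T(B(d,m))]$, which counts the components, is then strictly greater than $1$.

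For the second statement the isomorphism of components is automatic from Theorem \ref{rel.free} (every component is a copy of the Cayley graph of $T(B(d,m))$), for all $d$ and $m$. By Corollary \ref{rel.free.amen}(2) the remaining task, nonamenability, is equivalent to nonamenability of the group $T(B(d,m))$, for which it is enough to locate one nonamenable subgroup. Here I would use that inner automorphisms are always tame: the homomorphism $\operatorname{Aut} F_d\to\operatorname{Aut} B(d,m)$ carries $\operatorname{Inn}(F_d)$ onto $\operatorname{Inn}(B(d,m))$, so $\operatorname{Inn}(B(d,m))\cong B(d,m)/Z(B(d,m))$ sits inside $T(B(d,m))$. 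For $m$ odd and large the centre $Z(B(d,m))$ is trivial and $B(d,m)$ is nonamenable by Adyan \cite{Ad82}; hence $\operatorname{Inn}(B(d,m))\cong B(d,m)$ is a nonamenable subgroup of $T(B(d,m))$, and $T(B(d,m))$ is nonamenable. This observation already covers all $d\geq 2$; the stated range $d\geq 3$ is the one in which Coulon's analysis \cite{Coul} of $\operatorname{Aut} B(d,m)$ applies and supplies the complementary structural input (nonamenable, indeed free, families of tame automorphisms, together with control of the index of $T(B(d,m))$), which one may invoke instead to certify nonamenability.

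The reductions above are formal once Corollary \ref{rel.free.amen} and Theorem \ref{rel.free} are granted; the real weight sits in the two external inputs. In (1) the delicate point is the Moriah--Shpilrain count: one must show that the supply of tame automorphisms is genuinely outstripped at the threshold $m>2^d$, which is the quantitative heart of that part. In (2) the essential difficulty is structural: free Burnside groups are torsion, so one cannot exhibit a free subgroup of $T(B(d,m))$ by hand, and nonamenability must be imported, either from Adyan's deep theorem that $B(d,m)$ itself is nonamenable (via the inner-automorphism embedding above) or from Coulon's results on $\operatorname{Aut} B(d,m)$. I would also take care to verify that $Z(B(d,m))$ is trivial in the range used, so that $\operatorname{Inn}(B(d,m))\cong B(d,m)$, and to check that the thresholds ``large enough $m$'' in \cite{NA68}, \cite{Ad82} and \cite{Coul} are mutually compatible when $d\geq 3$.
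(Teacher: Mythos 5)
Your proposal is correct, and for the non-connectedness claim and for the isomorphism of the components it coincides with the paper's argument: both reduce, via Theorem \ref{rel.free} and Corollary \ref{rel.free.amen}, to properties of $T(B(d,m))$, and both obtain non-tame automorphisms for $m>2^d$ from the Moriah--Shpilrain power maps $x_i\mapsto x_i^q$ \cite{MoSh}. Where you genuinely diverge is in certifying nonamenability of $T(B(d,m))$. The paper invokes Coulon's theorem that $\operatorname{Out}B(d,m)$ contains a free subgroup of rank $2$ for $d\geq 3$ and odd $m$ large, together with the observation that this free subgroup consists of classes of tame automorphisms, so that $T(B(d,m))$ surjects onto a nonamenable group. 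You instead observe that $\operatorname{Inn}(B(d,m))\cong B(d,m)/Z(B(d,m))$ lies in $T(B(d,m))$ (since $\rho$ carries conjugation by $w$ to conjugation by $\pi(w)$ and $\pi$ is onto) and import nonamenability from Adyan's theorem \cite{Ad82}, which the paper already cites. This is a valid and more economical route; note that you do not even need triviality of the centre, since $Z(B(d,m))$ is abelian, hence amenable, so $B(d,m)/Z(B(d,m))$ is nonamenable as soon as $B(d,m)$ is. Your argument in fact covers $d=2$ as well, whereas the paper's statement is restricted to $d\geq 3$ because that is the range of Coulon's theorem. What the paper's heavier input buys is the stronger structural fact that the tame automorphisms contain a nonabelian free subgroup even modulo inner automorphisms --- something your torsion subgroup $\operatorname{Inn}(B(d,m))$ cannot provide --- but for the nonamenability assertion of the corollary your inner-automorphism argument suffices.
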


\bigskip
The authors would like to thank Pierre de la Harpe and Rostislav Grigorchuk for valuable remarks on the first version of the paper, Christian Hagendorf for the Mathematica implementation of the graph generating code, Anton Malyshev for pointing out a missing case in Theorem \ref{justinfinite} in the first version of the paper, and the anonymous referee for the careful reading of the paper.

\section{Nonamenability of Nielsen graphs of finitely generated indicable groups}
\label{Nonamenability of Nielsen graphs of indicable groups}
The proofs of Theorem \ref{indicable.amen} and Theorem \ref{elemamen} are based on an analysis of Nielsen graphs $N_n(\mathbb{Z})$, $n\geq 1$ (see Proposition \ref{nonamen}). A description of the graph $N_2(\mathbb{Z})$ appears as Example $1.3$ in \cite{MaPa}.  We begin with a few lemmas about nonamenability of subgraphs and graph coverings that will be used to deduce Theorems \ref{indicable.amen} and \ref{elemamen}.

%\begin{lemma}
%Let $X$ be an infinite connected graph of uniformly bounded degree. Let $X^{'}$ be a subgraph of $X$ that spans all vertices of $X$ except possibly a finite number of exceptions. If $X^{'}$ is nonamenable, then $X$ is also nonamenable. 
%\label{subforest}
%\end{lemma}
%\begin{proof}  Let $P=V(X)\setminus V(X^{'})$ be the finite subset of vertices of $X$ not in $X^{'}$. Let $S$ be a finite subset of $V(X)$. Suppose first that $S\not\subseteq P$. We denote by $S^{'}=S\setminus S\cap P \neq \emptyset$. Then $$\inf_{S\not\subseteq P} \frac{\partial_X(S)}{|S|}\geq \inf_{S\not\subseteq P}\frac{\partial_X(S^{'})}{|S^{'}|}\cdot \frac{|S^{'}|}{|S|}\geq \frac{1}{|P|+1}\inf_{S^{'}\subset V(X^{'})} \frac{\partial_{X^{'}}(S^{'})}{|S^{'}|}=\frac{1}{|P|+1} h(X^{'}).$$ Suppose now that  $S\subseteq P$. Since $X$ is connected we have $\partial_X(S)>0$ and since $P$ is finite we deduce that $\inf_{S\subseteq P} \frac{\partial_X(S)}{|S|}>0$. We conclude that $h(X)\geq \min\{\frac{1}{|P|+1} h(X^{'}), \inf_{S\subseteq P} \frac{\partial_X(S)}{|S|}\}>0$. 
%\end{proof}

\begin{lemma}
Let $X$ be an infinite connected graph with uniformly bounded degree. Let $X^{'}$ be a subgraph of $X$ and suppose that there exists $D\geq 0$ such that for any vertex $x\in V(X)$ there exists a vertex $x^{'} \in V(X^{'})$ at distance at most $D$. If $X^{'}$ is nonamenable, then $X$ is nonamenable.
 \label{subforest}
\end{lemma}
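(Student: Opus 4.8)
The plan is to prove the contrapositive: assuming $X$ is amenable, I will produce finite subsets of $V(X')$ whose edge-boundary-to-size ratio is arbitrarily small, contradicting the hypothesis that $h(X')>0$. Fix a bound $\Delta$ on the degrees of $X$; since $X'$ is a subgraph its degrees are also $\le\Delta$, and there is a constant $c_D=c_D(\Delta)$ bounding the number of vertices in any ball $B_X(x,D)$. Fix a projection $\pi\colon V(X)\to V(X')$ with $d_X(x,\pi(x))\le D$ for all $x$, chosen so that $\pi$ restricts to the identity on $V(X')$ (each $x'\in V(X')$ is its own nearest point of $V(X')$). Since each fibre $\pi^{-1}(t)$ lies in $B_X(t,D)$, fibres have size at most $c_D$.

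The first step is a standard boundary-layer estimate. For finite $S\subseteq V(X)$ set $L(S)=\{x\in S : d_X(x,V(X)\setminus S)\le D+1\}$. Every $x\in L(S)$ lies within distance $D$ of a vertex of $S$ incident to a boundary edge of $S$, so $L(S)\subseteq\bigcup B_X(v,D)$ over such inner-boundary vertices $v$, giving $|L(S)|\le c_D\,|\partial_X(S)|$. As $X$ is infinite and amenable we have $h(X)=0$, so for every $\eta>0$ we may choose a finite nonempty $S$ with $|\partial_X(S)|/|S|$ small enough that $|L(S)|\le\eta|S|$.

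The heart of the argument is to project the deep interior and track where boundary edges go. Put $S^\circ=S\setminus L(S)$ and $T=\pi(S^\circ)\subseteq V(X')$. Since the fibres of $\pi$ have size at most $c_D$, we get $|T|\ge |S^\circ|/c_D\ge (1-\eta)|S|/c_D$, and $\eta<1$ forces $S^\circ\neq\emptyset$, so $T$ is nonempty. For the boundary, take any edge $\{t,s\}\in\partial_{X'}(T)$ with $t\in T$ and $s\notin T$, and write $t=\pi(x)$ with $x\in S^\circ$. The triangle inequality gives $d_X(x,s)\le d_X(x,t)+1\le D+1<d_X(x,V(X)\setminus S)$, so $s\in S$; and since $\pi$ fixes $V(X')$, the vertex $s\in V(X')\setminus T$ cannot lie in $S^\circ$, whence $s\in L(S)$. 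Thus the outer endpoint of every $X'$-boundary edge of $T$ lies in $L(S)\cap V(X')$, and as each such vertex meets at most $\Delta$ edges of $X'$ we obtain $|\partial_{X'}(T)|\le\Delta\,|L(S)|\le\Delta\eta|S|$. Combining the two bounds yields $|\partial_{X'}(T)|/|T|\le \Delta c_D\,\eta/(1-\eta)$. Choosing $\eta$ small makes this strictly less than the isoperimetric constant $h(X')>0$ furnished by nonamenability of $X'$, contradicting its definition as an infimum over finite nonempty subsets of $V(X')$. Hence $X$ is nonamenable.

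The main obstacle is the bookkeeping in the third step: one must set the inner-layer radius to $D+1$, exactly one more than the projection displacement $D$, so that the outer endpoint of every $X'$-boundary edge is forced into the thin layer $L(S)$ rather than the deep interior, while simultaneously controlling the fibres of $\pi$ to keep $|T|$ comparable to $|S|$. Both are quantitative consequences of the uniform degree bound $\Delta$, so once these estimates are in place the proof reduces to letting $\eta\to 0$.
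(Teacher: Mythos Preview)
Your proof is correct. It differs from the paper's in organization: the paper argues directly, showing that for \emph{every} finite $S\subset V(X)$ one has $|\partial_X S|/|S|\ge h(X')/d^{2D+2}$ (where $d$ is the degree bound), by expanding $S$ to its $D$-neighbourhood, observing that $B_D(S)\cap V(X')$ is large, applying $h(X')$ to that set, and noting that the resulting $X'$-boundary edges land in $B_{D+1}(S)\setminus B_D(S)$. You instead run the contrapositive, taking a F{\o}lner set $S$ in $X$, projecting its \emph{deep interior} into $X'$ via a bounded-displacement retraction $\pi$, and showing the image has small $X'$-boundary. The two arguments are dual---expansion-and-intersect versus contract-and-project---and both hinge on the same bounded-degree ball-size estimate. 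The paper's version yields an explicit quantitative bound on $h(X)$ in terms of $h(X')$; yours is a bit cleaner in pinning down exactly where the $X'$-boundary edges of $T$ must end up (namely in the thin layer $L(S)$), which sidesteps a small edge-versus-vertex bookkeeping issue present in the paper's inequality $|B_{D+1}(S)|\ge |B_D(S)|+|\partial_{X'}(B_D(S)\cap V(X'))|$.
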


\begin{proof} 
Let $S\subset V(X)$ be a finite subset of the vertex set of $X$. 
%Notice that if $S\subset V(X^{'})$ then $\frac{|\partial_X S|}{|S|}\geq\frac{|\partial_{X^{'}} S|}{|S|}\geq h(X^{'})>0$ where $h(X^{'})$ is an isoperimetric constant of $X^{'}$. Assume that $S$ is not a subset of $V(X')$.
Denote by $B_D(S)=\{x \in V(X)\mid d_X(x,S)\leq D\}$ the $D$-neighborhood of the set $S$ in $X$. 

By assumption, for every vertex $s\in S$ there is at least one vertex $v\in B_D(S)\cap V(X^{'})$. Set 
$$N := \max _{v\in B_D(S)\cap V(X')} | S\cap B_D(v) | .$$ 
Then  $|B_D(S)\cap V(X^{'})|\geq  |S|/N$. 

If $d$ is a uniform bound on the vertex degree of $X$, then for each $v\in B_D(S)\cap V(X^{'})$ we can roughly estimate $N\leq d+d(d-1)+\dots+d\cdot (d-1)^{D-1}\leq d^{D+1}$ since there are at most $d$ vertices at distance $1$ from $v$, $d(d-1)$ vertices at distance $2$ from $v$, \dots, $d\cdot(d-1)^{D-1}$ vertices at distance $D$ from $v$. We conclude that $|B_D(S)\cap V(X^{'})|\geq |S|/d^{D+1}$.

Now we can estimate
\begin{align*} |B_{D+1}(S)| \geq& |B_{D}(S)|+|\partial_{X^{'}} (B_D(S)\cap V(X^{'}))| \\ 
\geq& |B_{D}(S)|+h(X^{'})|B_D(S)\cap V(X^{'})| \geq |S|+h(X^{'})|S|/d^{D+1} .
\end{align*}

By the same rough count as above, we have $|\partial{S}|\geq |B_1(S)\setminus S|\geq {|B_{D+1}(S)\setminus S|}/{d^{D+1}}$. Putting all the estimates together we get
$$\frac{|\partial S|}{|S|}\geq \frac{|B_{D+1}(S)\setminus S|}{d^{D+1}|S|}\geq h(X^{'})/d^{2D+2}$$ for any finite subset $S\subset V(X)$. Hence $X$ is nonamenable.

%Since $X^{'}$ is nonamenable we have \begin{equation} |B_{D+1}(S)|\geq |B_{D}(S)|+|\partial_{X^{'}} (B_D(S)\cap X^{'})|\geq \end{equation}  

%Notice that $B_1(S)\setminus S=\{x\in X \mid d_X(x,S)=1, x\notin S\}$. We have $|B_1(S)\setminus S|\leq |\partial_X S|$. 
%Additionally $B_1(S)\setminus S\subset B_{D+1}(S)\setminus S$ and for each vertex $v$ in $B_{D+1}(S)\setminus S$ there is at least one vertex $x\in B_1(S)\setminus S$ such that $d_X(x,v)\leq D$.
%We are now able to make the following estimation: $$|B_{D+1}(S)\setminus S|\leq |B_1(S)\setminus S| \cdot M$$ where $M$ is the maximal number of vertices which are at distance at most $D$ from some $x\in B_1(S)\setminus S$.
%We have already obtained that $M\leq d^{D+1}$. We deduce that  \begin{equation}  \end{equation} We conclude from (1) and (2) the following: 
 \end{proof}

Recall, that a graph $X$ \emph{covers} a graph $X^{'}$ if there is a surjective graph morphism $\varphi \colon X\rightarrow X^{'}$ that is an isomorphisms when restricted to the star (a small open neighborhood) of any vertex of $X$. In this case the map $\varphi$ is called a \emph{covering map}.
\begin{lemma}
If a graph covers a nonamenable graph then it is itself nonamenable.
\label{Pasc}
\end{lemma}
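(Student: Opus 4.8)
The plan is to use Kesten's spectral radius criterion, just recalled above, to reduce the nonamenability of $X$ to that of $X'$. First I would record two structural consequences of the covering map $\varphi\colon X\to X'$. Since $\varphi$ restricts to an isomorphism on the star of every vertex, it preserves vertex degrees, $\deg_X(x)=\deg_{X'}(\varphi(x))$ for all $x$, so if $X'$ is $m$-regular then so is $X$; moreover the hypothesis that $X'$ is nonamenable forces $X'$, and hence by surjectivity of $\varphi$ also $X$, to be infinite. It therefore suffices to prove $\rho(X)\le\rho(X')$, because $X'$ nonamenable gives $\rho(X')<1$, whence $\rho(X)<1$ and $X$ is nonamenable.

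The core of the argument is a comparison of closed-path counts via the unique path-lifting property of coverings. Fix a base vertex $x\in V(X)$ and set $x'=\varphi(x)$. Because $\varphi$ is a local isomorphism at each vertex, every path of length $k$ in $X'$ issuing from $x'$ lifts uniquely to a path of length $k$ in $X$ issuing from $x$; thus $\varphi$ induces a bijection between the length-$k$ paths starting at $x$ and the length-$k$ paths starting at $x'$. Now restrict this bijection to those paths from $x$ that are \emph{closed}, i.e.\ return to $x$: each such path projects to a path from $x'$ returning to $\varphi(x)=x'$, hence to a closed path at $x'$, and distinct closed paths at $x$ project to distinct closed paths at $x'$. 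This yields an injection from closed paths of length $k$ at $x$ into closed paths of length $k$ at $x'$, so $a_k(x)\le a_k(x')$ for every $k$. Taking $k$-th roots, passing to $\limsup$, and dividing by the common degree $m$ then gives $\rho(X)\le\rho(X')<1$, and Kesten's criterion completes the argument.

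The one point requiring care, and the reason an inequality rather than an equality appears, is the asymmetry of lifting: while a closed path at $x$ always projects to a closed path at $x'$, a closed path at $x'$ lifts to a path starting at $x$ that need only terminate somewhere in the fiber $\varphi^{-1}(x')$, not necessarily at $x$ itself. This is precisely why we obtain only an injection of closed paths, which is exactly what is needed. I expect no serious obstacle beyond stating this lifting correctly.

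If one prefers to avoid the regularity hypothesis implicit in Kesten's criterion, the same conclusion follows combinatorially at the level of the isoperimetric constant. Given a finite $S\subseteq V(X)$, push it to its fiber-counting function $f(v')=|S\cap\varphi^{-1}(v')|$ on $V(X')$, which has finite support. The edges of $X$ lying over a fixed edge $e'=(u',v')$ of $X'$ form a matching between the fibers $\varphi^{-1}(u')$ and $\varphi^{-1}(v')$, whence $|f(u')-f(v')|$ is at most the number of boundary edges of $S$ over $e'$; summing over $E(X')$ gives $|\partial_X S|\ge\sum_{e'=(u',v')}|f(u')-f(v')|$. Applying the co-area inequality on $X'$, namely $\sum_{e'}|f(u')-f(v')|\ge h(X')\sum_{v'}f(v')=h(X')\,|S|$, we conclude $h(X)\ge h(X')>0$, so $X$ is nonamenable.
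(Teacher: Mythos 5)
Your first argument is exactly the paper's proof: project closed paths through the covering map $\varphi$, note that distinct closed paths at $x$ project to distinct closed paths at $\varphi(x)$ (by uniqueness of lifts), conclude $a_k^{X}(x)\le a_k^{X'}(\varphi(x))$ and hence $\rho(X)\le\rho(X')<1$ via Kesten's criterion. You spell out two points the paper leaves implicit --- that a covering preserves vertex degrees (so regularity passes to $X$) and that the path correspondence is only an injection on closed paths because a lift of a closed path at $\varphi(x)$ need only end in the fiber --- and both observations are correct. Your second argument, bounding $|\partial_X S|$ below by pushing $S$ to the fiber-counting function $f$ on $V(X')$, using that the edges over a fixed edge $e'=(u',v')$ form a perfect matching between fibers to get $|\partial_X S|\ge\sum_{e'}|f(u')-f(v')|$, and then applying the co-area inequality on $X'$, is not in the paper and is a genuinely different route: it yields the stronger quantitative conclusion $h(X)\ge h(X')$ and dispenses with the regularity and connectedness hypotheses built into Kesten's criterion, at the cost of invoking the co-area inequality. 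Both arguments are sound.
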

\begin{proof} Let $\varphi\colon X\rightarrow X^{'}$ be a graph covering map. Since $\varphi$ is a covering, closed paths in $X$ are mapped onto closed paths in $X^{'}$.  We deduce therefore that  $a_k^{X^{'}}(\varphi(x))\geq a_k^{X}(x)$ for any $x\in V(X)$, where $a_k^{X}(x)$ is the number of closed paths of length $k$ starting from a point $x$ in $X$; and consequently $\rho(X)\leq \rho(X^{'})$. In particular if $\rho(X^{'})<1$ then $\rho(X)<1$. \end{proof}

\begin{lemma}
Let $\pi\colon G\rightarrow H$ be an epimorphism between finitely generated groups and $n\geq \operatorname{rank}(G)$. If $N_n(H)$ is connected then $N_n(G)$ covers $N_n(H)$.
\label{cover}
\end{lemma}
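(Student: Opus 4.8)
The plan is to exhibit an explicit covering map $\pi_*\colon N_n(G)\to N_n(H)$ obtained by applying $\pi$ coordinatewise. First I would define $\pi_*(g_1,\dots,g_n)=(\pi(g_1),\dots,\pi(g_n))$. Since $\pi$ is surjective, the image of a generating $n$-tuple of $G$ is a generating $n$-tuple of $H$, so $\pi_*$ sends $V(N_n(G))$ into $V(N_n(H))$; and because $\pi$ is an epimorphism we have $n\geq \operatorname{rank}(G)\geq \operatorname{rank}(H)$, so the target graph $N_n(H)$ is defined.

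Next I would verify that $\pi_*$ is a label-preserving graph morphism. Each elementary Nielsen move is built from group multiplication and inversion, and $\pi$ is a homomorphism, so $\pi_*\circ\phi=\phi\circ\pi_*$ for every elementary Nielsen move $\phi$. For instance $\pi_*(R_{ij}^{+}(g_1,\dots,g_n))=(\dots,\pi(g_i)\pi(g_j),\dots)=R_{ij}^{+}(\pi_*(g_1,\dots,g_n))$, and the remaining moves are handled identically, using $\pi(g^{-1})=\pi(g)^{-1}$ for the inversion moves $I_j$. Consequently the edge of $N_n(G)$ labelled $\phi$ issuing from a vertex $v$ is carried to the edge of $N_n(H)$ labelled $\phi$ issuing from $\pi_*(v)$.

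The local isomorphism condition is then essentially formal. Both $N_n(G)$ and $N_n(H)$ are Schreier graphs for one and the same generating set of $\operatorname{Aut}F_n$, namely the elementary Nielsen moves, so at every vertex there is exactly one edge per label. Since $\pi_*$ carries the $\phi$-labelled edge at $v$ to the $\phi$-labelled edge at $\pi_*(v)$, its restriction to the star of $v$ is a label-preserving bijection onto the star of $\pi_*(v)$, hence an isomorphism of stars.

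The only remaining, and the essential, point is surjectivity, and this is where the hypothesis that $N_n(H)$ is connected enters. The image $\operatorname{Im}(\pi_*)$ is nonempty, as it contains the image of any generating $n$-tuple of $G$. Moreover it is closed under Nielsen moves: if $(h_1,\dots,h_n)=\pi_*(g_1,\dots,g_n)$ lies in the image and $\phi$ is an elementary Nielsen move, then $\phi(g_1,\dots,g_n)$ is again a generating $n$-tuple of $G$ and $\pi_*(\phi(g_1,\dots,g_n))=\phi(h_1,\dots,h_n)$, so the neighbour $\phi(h_1,\dots,h_n)$ is also in the image. Thus $\operatorname{Im}(\pi_*)$ is a nonempty union of connected components of $N_n(H)$; by connectedness it equals $V(N_n(H))$, so $\pi_*$ is surjective. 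Combining surjectivity with the star-isomorphism property shows $\pi_*$ is a covering map, proving that $N_n(G)$ covers $N_n(H)$. I expect this lifting-of-moves argument to be the crux, since it is the only step that genuinely uses connectedness of the base graph.
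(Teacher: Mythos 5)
Your proposal is correct and follows essentially the same route as the paper: the coordinatewise map, the observation that it commutes with the elementary Nielsen moves (giving the star isomorphisms), and the use of connectedness of $N_n(H)$ to get surjectivity. Your phrasing of the surjectivity step (the image is nonempty and closed under Nielsen moves, hence a union of components) is just a tidier packaging of the paper's path-lifting argument.
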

\begin{proof} Let us consider the map $$\varphi\colon N_n(G)\rightarrow N_n(H),$$ $$\varphi( (g_1,\dots,g_n))= (\pi(g_1),\dots,\pi(g_n))$$ and prove that it is a covering map. 

First, observe that $\varphi$ maps the star of a vertex $(g_1,\dots,g_n)$ of $N_n(G)$ bijectively onto the star of $\varphi((g_1,\dots,g_n))$ in $N_n(H)$ because the map $\varphi$ commutes with the action of $\operatorname{Aut}F_n$. 
%$\varphi(R_{ij}^{\pm}(g_1,\dots,g_n)))=R_{ij}^{\pm}(\pi(g_1),\dots,\pi(g_n))$, $\varphi(L_{ij}^{\pm}(g_1,\dots,g_n)))=L_{ij}^{\pm}(\pi(g_1),\dots,\pi(g_n))$, $\varphi(I_j(g_1,\dots,g_n)))=I_j(\pi(g_1),\dots,\pi(g_n))$, $1\leq i,j\leq n$ and $i\neq j$. 

Second, the map $\varphi$ is surjective. To see this we consider a generating $n$-tuple $(h_1,\dots,h_n)$ of $H$ and show that there exists a generating $n$-tuple of $G$ which is mapped by $\varphi$ onto $(h_1,\dots,h_n)$. By assumption $N_n(H)$ is connected, therefore for any $(s_1,\dots,s_n)\in V(N_n(G))$ its image $\varphi(s_1,\dots,s_n)$ is connected with  $(h_1,\dots,h_n)$ by a sequence of elementary Nielsen moves. As $\varphi$ commutes with the elementary Nielsen moves we conclude that $(h_1,\dots,h_n)$ is the image under $\varphi$ of some $n$-tuple in $G^n$ that belongs to the orbit of $(s_1,\dots,s_n)$ under automorphisms of $F_n$, thus is generating. \end{proof}

\noindent
{\bf Remark.} Observe that if we drop the condition that $N_n(H)$ is connected, in Lemma \ref{cover}, we are still able to conclude that each connected component of $N_n(G)$ covers some connected component of $N_n(H)$.

%Let us now examine the structure of the Nielsen graphs $N_n(\mathbb{Z})$.
\begin{prop} The Nielsen graph $N_n(\mathbb{Z})$ is finite if $n=1$ and nonamenable if $n\geq 2$. 
In addition, $N_n(\mathbb{Z})$ is connected for $n\geq 1$.
\label{nonamen}
\end{prop}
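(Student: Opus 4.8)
The plan is to work with the concrete description of $N_n(\mathbb Z)$: identifying $\mathbb Z^n$ with integer row vectors, the generating $n$-tuples are exactly the \emph{primitive} vectors (those $(a_1,\dots,a_n)$ with $\gcd(a_1,\dots,a_n)=1$), while, since $\mathbb Z$ is abelian, the moves $R_{ij}^\pm=L_{ij}^\pm$ act by $a_i\mapsto a_i\pm a_j$ and $I_j$ acts by $a_j\mapsto -a_j$. For $n=1$ the only primitive vectors are $\pm 1$, joined by the single move $I_1$, so $N_1(\mathbb Z)$ is one edge, hence finite and connected. For connectedness when $n\ge 2$ I would run the Euclidean algorithm: using $a_i\mapsto a_i\pm a_j$ one reduces any primitive vector to one having a coordinate equal to $\pm 1$, then clears the remaining coordinates and fixes the sign to reach $(1,0,\dots,0)$; equivalently, the moves generate $GL_n(\mathbb Z)$ (elementary matrices together with sign changes), which acts transitively on primitive vectors. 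Note also that $N_n(\mathbb Z)$ is regular of degree bounded in terms of $n$, as required to apply Lemma~\ref{subforest}.

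For nonamenability when $n\ge 2$ the idea is to exhibit inside $N_n(\mathbb Z)$ a nonamenable subgraph that is $D$-dense for some fixed $D$, and then invoke Lemma~\ref{subforest}. The right subgraph is a \emph{subforest} $F$ (as the name of the lemma suggests): take as vertices \emph{all} primitive vectors with strictly positive entries, and keep only the two additive edges in the first two coordinates, $(a_1,a_2,\vec c)\mapsto(a_1+a_2,a_2,\vec c)$ and $(a_1,a_2,\vec c)\mapsto(a_1,a_1+a_2,\vec c)$, where the tail $\vec c=(a_3,\dots,a_n)$ is left untouched. Fixing $\vec c$ decomposes $F$ into connected components, and on each one recognizes the Stern--Brocot / Calkin--Wilf tree: every positive pair $(a_1,a_2)$ has the two children just described and, reversing the move of the strictly larger coordinate, a unique parent, the recursion terminating at the root $(g,g,\vec c)$ with $g=\gcd(a_1,a_2)$ (primitive because the component contains a primitive vector). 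Each component is thus an infinite binary tree, so $F$ is a disjoint union of trees with isoperimetric constant bounded below by a positive universal constant; hence $h(F)>0$ and $F$ is nonamenable.

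It then remains to check that $F$ is $D$-dense in $N_n(\mathbb Z)$ for a $D$ depending only on $n$. Given any primitive vector, first apply $I_j$ to the negative coordinates (at most $n$ moves) to make all entries nonnegative; since the gcd is $1$ not all entries vanish, so one fixes a positive coordinate $a_j$ and, for each zero coordinate $a_i$, applies $a_i\mapsto a_i+a_j$ (at most $n-1$ moves, each preserving primitivity) to make it positive. This reaches $V(F)$ in at most $2n$ moves, so $D=2n$ works and Lemma~\ref{subforest} gives nonamenability of $N_n(\mathbb Z)$ for all $n\ge 2$. The main obstacle is precisely the correct choice of subforest: the naive attempt to fix the tail at $(1,\dots,1)$ and use a single Stern--Brocot tree fails the density requirement, since bringing a coordinate down to $1$ costs an unbounded number of Euclidean steps; taking all positive primitive vectors as vertices, so that $F$ becomes a genuine forest split across the tails, is exactly what makes the tree structure and the uniform density hold simultaneously. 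One could alternatively treat $n\ge 3$ by observing that $N_n(\mathbb Z)$ is the Schreier graph of $GL_n(\mathbb Z)$ on primitive vectors and invoking Property~$(T)$ of $SL_n(\mathbb Z)$ together with the criterion recalled in the introduction, but that forces a separate argument for $n=2$, whereas the subforest handles all $n\ge 2$ at once.
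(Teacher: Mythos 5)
Your proof is correct, and it rests on the same key lemma as the paper's (Lemma \ref{subforest}: a $D$-dense nonamenable subforest forces nonamenability of the ambient graph), but the forest you build is genuinely different and simpler. The paper constructs, for every pair of disjoint sets $A,B\subseteq\{1,\dots,n\}$ with $|B|\le n-2$, a tree $\Gamma_{A,B}$ on the primitive vectors that are negative on $A$, zero on $B$ and positive elsewhere, using \emph{all} the edges $R_{ij}^{+}$ and then deleting edges by a rather delicate case analysis to kill cycles, loops and multiple edges; the payoff is a forest missing only the $2n$ vertices $(0,\dots,\pm1,\dots,0)$ (so $D=1$), with every non-root vertex of degree at least $3$, which leads to the explicit estimate $h(N_n(\mathbb{Z}))\ge\frac{1}{2n+1}$ recorded in the remark after the proposition. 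You instead restrict to the positive orthant and to the two moves $R_{12}^{+}$, $R_{21}^{+}$ only, so that each component is a Stern--Brocot binary tree: uniqueness of the parent, absence of cycles, and a uniform positive Cheeger constant are then immediate, with no deletion bookkeeping, at the price of the larger (but still uniform) density constant $D=2n$, which you verify correctly by first applying $I_j$ to negative entries and then clearing zeros against a positive coordinate. Your identification of the roots $(g,g,\vec c)$ with $g=\gcd(a_1,a_2)$, and the observation that $\gcd(g,\vec c)=1$ guarantees they are genuine vertices, are both sound. The $n=1$ case, the connectedness argument via the Euclidean algorithm, and the Property $(T)$ alternative for $n\ge 3$ coincide with the paper's.
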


\noindent
{\bf Remark.} For a finitely generated infinite group $G$, the graph $N_n(G)$, $n\geq \operatorname{rank}(G)$, is finite if and only if $G\cong\mathbb{Z}$ and $n=1$. 

Indeed, suppose that $\operatorname{rank}(G)\geq 2$. Take $n\geq \operatorname{rank}(G)$. If $N_n(G)$ is finite then in particular the group of automorphisms $\operatorname{Aut} G$ is finite which is equivalent to $G$ being a finite and central extension of $\mathbb{Z}$ \cite{Alpe}. Since any such group has infinite abelianization, it admits an epimorphism onto $\mathbb{Z}$. Then $N_n(G)$ covers the infinite graph $N_n(\mathbb{Z})$, which is in contradiction with our assumption $\operatorname{rank}(G)\geq 2$. Thus $G\cong\mathbb{Z}$ and $n=1$.

\begin{proof} Notice that the set of vertices $V(N_1(\mathbb{Z}))=\{1, -1\}$ and $I_1(1)=-1$, and therefore $N_1(\mathbb{Z})$ is finite and connected.

From now on suppose $n\geq 2$. The set of vertices of the Nielsen graph $N_n(\mathbb{Z})$ is $V(N_n(\mathbb{Z}))=\{(x_1,\dots,x_n)\mid \langle x_1,\dots,x_n\rangle=\mathbb{Z}\}=\{(x_1,\dots,x_n)\mid gcd(x_1,\dots,x_n)=1\}$. 
By the Euclid's algorithm $N_n(\mathbb Z)$ is connected.

%Let $n=2$. It is easy to see that any $(x_1,x_2)$ from $V(N_n(\mathbb{Z}))$ defines a matrix from $GL_2(\mathbb{Z})$ and vice versa. Indeed, since $(x_1,x_2)\in V(N_n(\mathbb{Z}))$ there exist $\lambda_1,\lambda_2 \in \mathbb{Z}$ such that $\lambda_1 x_1+ \lambda_2 x_2=1$. Then the corresponding matrix is
%\[ M= \left[ {\begin{array}{cc}
       %      x_1 & x_2 \\
        %     -\lambda_2 & \lambda_1 \\
         %       \end{array} } \right].\] 
%On the other hand, for a matrix $M \in GL_n(\mathbb{Z})$ we have $\operatorname{det}(M)=\pm 1$ and the first row of $M$ determines an element of  $V(N_n(\mathbb{Z}))$. We deduce that $N_2(\mathbb{Z})=Cay(GL_2(\mathbb{Z}), \{\text{Elementary matrices}\})$ therefore nonamenable.

To prove nonamenability of $N_n(\mathbb Z)$, $n\geq 2$, we will exhibit a rooted subforest $\Gamma$ in $N_n(\mathbb{Z})$ of vertex degree at least $3$ everywhere except in the roots of its components. This subforest spans all but $2n$ vertices of $N_n(\mathbb{Z})$. Nonamenability of $N_n(\mathbb{Z})$ will then follow from nonamenability of the subforest by Lemma $2.1$.

The subforest $\Gamma$ is described by its components: $\Gamma=\cup_{A, B} \Gamma_{A,B} $ where $A$ and $B$ are disjoint subsets of $\{1,\dots, n\}$ (including the empty set) and $|B|\leq n-2$ .

%each component of which is a spanning tree of a subgraph of $N_n(\mathbb{Z})$ spanned by generating $n$-tuples $(x_1,\dots,x_n)$ of $\mathbb{Z}$ with a fixed sign ($\pm 1$ or $0$) of each entry.

%Observe that the graph $N_n(\mathbb{Z})$ contains loops, cycles and multiple edges. 

Let us first describe the component $\Gamma_{\emptyset,\emptyset}$ of $\Gamma$. The vertex set of $\Gamma_{\emptyset,\emptyset}$  is $$V(\Gamma_{\emptyset,\emptyset})=\{(x_1,\dots,x_n)\in \mathbb{Z}^n\mid \langle x_1,\dots,x_n\rangle=\mathbb{Z}  \text{ and } x_i>0, 1\leq i\leq n\}.$$ 
At every vertex $(x_1,\dots,x_n) \in V(\Gamma_{\emptyset, \emptyset})$, consider all the edges $\{e_{ij}(x_1,\dots,x_n)\}_{1\leq i,j\leq n}$ that correspond to $R_{ij}^{+}(x_1,\dots,x_n)$. Some of them will have to be deleted so that the graph $\Gamma_{\emptyset, \emptyset}$ has no cycles, loops or multiple edges.

Here is one way to define the set of edges to be deleted.
\begin{itemize}
\item[$\diamond$] if $R_{12}^{+}(x_1,\dots,x_n)=R_{ij}^{+}(x_1,\dots,x_n)$, $(i,j)\neq (1,2)$, delete $e_{ij}(x_1,\dots,x_n)$;
\item[$\diamond$] if $R_{21}^{+}(x_1,\dots,x_n)=R_{ij}^{+}(x_1,\dots,x_n)$, $(j,i)\neq (2,1)$, delete $e_{ij}(x_1,\dots,x_n)$.
\end{itemize}
Notice that $R_{12}^{+}(x_1,\dots,x_n)\neq R_{21}^{+}(x_1,\dots,x_n)$. Indeed, if they were equal, then $x_1+x_2=x_1$ and $x_2+x_1=x_2$, therefore $x_1=x_2=0$.
\begin{itemize}
\item[$\diamond$] If $R_{12}^{+}(x_1,\dots,x_n)=R^{+}_{ij}(y_1,\dots,y_n)$, $(i,j)\neq (1,2)$, delete $e_{ij}(y_1,\dots,y_n)$;
\item[$\diamond$] if $R_{21}^{+}(x_1,\dots,x_n)=R^{+}_{ij}(y_1,\dots,y_n)$, $(i,j)\neq (2,1)$, delete $e_{ij}(y_1,\dots,y_n)$.
\end{itemize}
Notice that $R_{12}^{+}(x_1,\dots,x_n)\neq R_{21}^{+}(y_1,\dots,y_n)$. Indeed, if they were equal then $x_1+x_2=y_1$ and $y_2+y_1=x_2$, therefore $x_1+y_2$=0, we obtain a contradiction with $x_i, y_i >0$.
\begin{itemize}
\item[$\diamond$] Otherwise, if there exist $(i_1, j_1)$, \dots, $(i_k, j_k)$ with $k\geq 2$ such that $R_{i_lj_l}^{+}(x_1,\dots,x_n)=R_{i_m j_m}^{+}(x_1,\dots,x_n)$ for $1\leq l,m\leq k$, $l\neq m$, and neither of indices $(i_l, j_l)$ or $(i_m, j_m)$ is equal to $(1,2)$ or $(2,1)$ then keep only the edge with the largest in the lexicographical order index and keep it in the graph. The same rule applies when there exist $(i_1, j_1)$, \dots, $(i_k, j_k)$ for $k\geq 2$ such that $R_{i_lj_l}^{+}(x_1,\dots,x_n)=R_{i_m j_m}^{+}(y_1,\dots,y_n)$ for $1\leq l,m\leq k$, $l\neq m$ and neither of indices $(i_l, j_l)$ or $(i_m, j_m)$ is equal to $(1,2)$ or $(2,1)$: only the edge with the index largest in the lexicographical order remains in the graph $\Gamma_{\emptyset, \emptyset}$.
\end{itemize}

We conclude that the graph $\Gamma_{\emptyset, \emptyset}$ with the given structure of edges does not have cycles, loops or multiple edges. Let the vertex $(1,\dots,1)\in V(\Gamma_{\emptyset,\emptyset})$ be the root of this graph. There are at least two edges, $e_{12}(1,\dots,1)$ and $e_{21}(1,\dots,1)$, coming out of $(1,\dots,1)$, therefore it is of degree at least $ 2$. Any other vertex $(x_1,\dots,x_n)$ in $\Gamma_{\emptyset,\emptyset}$ is of degree at least $3$:
\begin{itemize}
\item[-] if $x_1>x_2$ then $(x_1-x_2, x_2,\dots,x_n)$ is connected to $(x_1,\dots,x_n)$ by $e_{12}(x_1-x_2,x_2,\dots,x_n)$, moreover there are at least two edges coming out of $(x_1,\dots,x_n)$: $e_{12}(x_1,\dots,x_n)$ and $e_{21}(x_1,\dots,x_n)$.
\item[-] if $x_2>x_1$ then $(x_1, x_2-x_1,\dots,x_n)$ is connected to $(x_1,\dots,x_n)$ by $e_{21}(x_1, x_2-x_1, \dots,x_n)$, moreover there are at least two edges coming out of $(x_1,\dots,x_n)$: $e_{12}(x_1,\dots,x_n)$ and $e_{21}(x_1,\dots,x_n)$.
\item[-] if $x_1=x_2$, then since $(x_1,\dots,x_n)\neq (1,\dots,1)$ there exists $x_i \neq 0$, $1\leq i\leq n$, such that $x_i\neq x_1$. If $x_1>x_i$ then $(x_1-x_i,\dots,x_i,\dots,x_n)$ has to be connected to $(x_1,\dots,x_n)$ by $e_{1i}(x_1-x_i, \dots, x_i, \dots, x_n)$ unless $e_{1i}(x_1-x_i, \dots, x_i, \dots, x_n)$ is in $F_{\emptyset, \emptyset}$, which means that there is another edge coming in $(x_1,\dots,x_n)$. We deduce that there is at least one edge coming in $(x_1,\dots,x_n)$ when $x_1>x_i$. Assume now that $x_i>x_1$ then $(x_1,\dots,x_i-x_1,\dots,x_n)$ is connected to $(x_1,\dots,x_n)$ by $e_{i1}(x_1, \dots, x_i-x_1,\dots,x_n)$ unless $e_{i1}(x_1, \dots, x_i-x_1,\dots,x_n)$ was deleted, which means that there is another edge coming in $(x_1,\dots,x_n)$. We deduce that there is at least one edge coming in $(x_1,\dots,x_n)$ when $x_i>x_1$. Moreover, there are always at least two edges coming out of $(x_1,\dots,x_n)$: $e_{12}(x_1,\dots,x_n)$ and $e_{21}(x_1,\dots,x_n)$.
\end{itemize}

Consider any point $(x_1,\dots,x_n)\in V(\Gamma_{\emptyset,\emptyset})$. By construction of $\Gamma_{\emptyset,\emptyset}$ there exists a path from $(x_1,\dots,x_n)$: $(x_1,\dots,x_n)\rightarrow (x_1^{(1)},\dots.,x_n^{(1)})\rightarrow  (x_1^{(i)},\dots.,x_n^{(i)})\rightarrow...$ such that $x_1^{(i+1)}+\dots+x_n^{(i+1)}<x_1^{(i)}+...x_n^{(i)}$. This sequence terminates at $(1,\dots,1)$ since $x_i>0$, $1\leq i\leq n$, and we conclude that any point in $\Gamma_{\emptyset,\emptyset}$ is connected to $(1,\dots,1)$. Therefore $\Gamma_{\emptyset,\emptyset}$ is a connected graph without cycles, i.e., a tree, every vertex of which, except for the root, is of degree at least 3. 

More generally, for any $A\subseteq \{1,\dots,n\}$ we define the subgraph $\Gamma_{A,\emptyset}$ of $N_n(\mathbb{Z})$ with the set of vertices $$V(\Gamma_{A,\emptyset})=\{(x_1,\dots,x_n)\in \mathbb{Z}^n\mid \langle x_1,\dots, x_n\rangle=\mathbb{Z}, \text{ } x_i<0 \text{ if } i\in A \text{ and }$$ $$x_i>0 \text{ otherwise}\}$$ and the set of edges $E(\Gamma_{A,\emptyset})$  defined symmetrically to $E(\Gamma_{\emptyset,\emptyset})$. The same arguments show that it is a tree, every vertex of which, except for the root, is of degree at least $3$.

Next, we  ``lower the dimension" and define, for all disjoint subsets $A$ and $B$ of $ \{1,\dots,n\}$ (including the empty set), $|B|\leq n-2$, subgraphs $\Gamma_{A,B}$ of $N_n(\mathbb{Z})$ with  $$V(\Gamma_{A,B})=\{(x_1,\dots,x_n)\mid \langle x_1,\dots,x_n \rangle=\mathbb{Z}, \text{ } x_i<0 \text{ if } i\in A,$$ $$x_i=0 \text{ if } i\in B, \text{ and } x_i> 0 \text{ otherwise}\}.$$

At every vertex $(x_1,\dots,x_n) \in V(\Gamma_{A, B})$,  consider all the edges $\{e_{ij}(x_1,\dots,x_n)\}_{1\leq i,j\leq n}$ corresponding to $R_{ij}^{+}(x_1,\dots,x_n)$ and then delete a subset of them so the graph $\Gamma_{A,B}$ has no cycles, loops or multiple edges.

For $A=\emptyset$ the  set of edges to be deleted can be defined in the similar way as for $\Gamma_{\emptyset,\emptyset}$, but instead of using $R_{12}^{+}$ and $R_{21}^{+}$, we use $R_{i_1j_1}^{+}$ and $R_{j_1i_1}^{+}$ such that $i_1, j_1\notin B$. And for $A\neq \emptyset$ we define the edges of $\Gamma_{A,B}$ symmetrically to the edges of  $\Gamma_{\emptyset, B}$. 

Let the vertex $(\epsilon_1,\dots, \epsilon_n) \in V(\Gamma_{A,B})$ be the root of $\Gamma_{A,B}$ for $\epsilon_i=-1$ if $i\in A$, $\epsilon_i=0$ if $i\in B$, and $\epsilon_i=1$ otherwise. As before, the graph $\Gamma_{A,B}$ is a tree, and every vertex, except for the root, is of degree at least $3$.
This completes the description of $\Gamma$.

Observe that $$V(N_n(\mathbb{Z}))=V(\Gamma)\cup (\pm 1,0,\dots,0)\cup...\cup (0,\dots,0,\pm 1).$$ Nonamenability of $N_n(\mathbb{Z})$ follows from Lemma \ref{subforest}. \end{proof}

Figure $1$ represents a finite fragment of the (infinite) Nielsen graph $N_2(\mathbb{Z})$ constructed using Mathematica 9.
\begin{figure}[h]
  \centering
\includegraphics[scale=0.40]{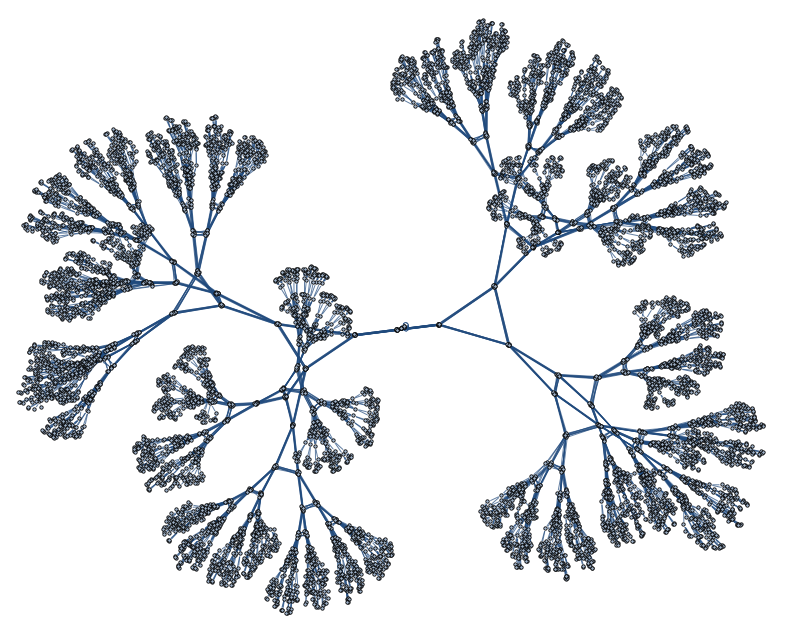}
  \caption{A finite fragment of $N_2(\mathbb{Z})$.}
\end{figure}

{\bf Remark.} The proof of Proposition gives us moreover an explicit estimate of the Cheeger constant of $N_n(\mathbb{Z})$. Let $S$ be a finite subset of vertices of $N_n(\mathbb{Z})$. If $S\subseteq \{(\pm 1,0,\dots,0) \cup... \cup (0,\dots,0,\pm 1)\}$ then $|\partial_{N_n(\mathbb{Z})}(S)|\geq (n-1)|S|$. Otherwise, let $A=S\cap \{(\pm 1,0,\dots,0) \cup... \cup (0,\dots,0,\pm 1)\}$. Then $|\partial_{N_n(\mathbb{Z})}(S)|\geq |\partial_{\Gamma}(S\setminus A)|$ and $$\frac{|\partial_{N_n(\mathbb{Z})}(S)|}{|S|}\geq \frac{|\partial_{\Gamma}(S\setminus A)|}{|S\setminus A|}\frac{|S\setminus A|}{|S|}\geq \frac{|\partial_{\Gamma}(S\setminus A)|}{|S\setminus A|}\frac{1}{2n+1}.$$ Therefore, $h(N_n(\mathbb{Z}))\geq \min\{(n-1), \frac{1}{2n+1} h(\Gamma)\}\geq \frac{1}{2n+1}$.

{\bf Remark.} Note that nonamenability of $N_n(\mathbb Z)$ for $n\geq 3$ also follows from the fact that $GL_n(\mathbb{Z})$, $n\geq 3$, has property (T). Indeed, suppose that $A$ is a finitely generated abelian group, and $n\geq \operatorname{rank}(A)$.  Then $A$ is a quotient of the free abelian group $\mathbb Z^n$. Consider the natural projection $\pi: F_n\rightarrow F_n/[F_n,F_n]$ which induces a homomorphism $\rho: \operatorname{Aut}F_n\rightarrow \operatorname{Aut}(F_n/[F_n,F_n]) = GL_n(\mathbb Z)$. Every Nielsen move defines an automorphism of $\mathbb Z^n$ which belongs to $\rho(\operatorname{Aut}F_n)$. Therefore every connected component of the Nielsen graph $N_n(A)$ is the Schreier graph \begin{equation} Sch(\rho(\operatorname{Aut}F_n), St_{\rho(\operatorname{Aut}F_n)} (a_1,\dots,a_n), \{\text{Nielsen moves}\})\end{equation}  with respect to the generating $n$-tuple $(a_1,\dots,a_n)$ which belongs to the connected component. 
Moreover observe that $\rho$ is an epimorphism \cite[3.5.1]{MaKS}. We conclude that $$(1)=Sch(GL_n(\mathbb{Z}), St_{GL_n{\mathbb{Z}}} (a_1,\dots,a_n), \{\text{Nielsen moves}\}).$$ Apply this to $A=\mathbb{Z}$ to conclude $$N_n(\mathbb{Z})=Sch(GL_n(\mathbb{Z}), St_{GL_n{\mathbb{Z}}} (x_1,\dots,x_n), \{\text{Nielsen moves}\})$$ for some generating $n$-tuple $(x_1,\dots,x_n)$ of $\mathbb{Z}$. As mentioned in the introduction, every connected infinite Schreier graph of a Property (T)-group is nonamenable. The graph $N_n(\mathbb{Z})$ is connected and infinite and therefore nonamenable. This argument does not apply to the case $n=2$.

\medskip

Proposition \ref{nonamen} allows to conclude that all Nielsen graphs $N_n(G)$ of a finitely generated indicable group $G$, 
$n\geq \max\{2, \operatorname{rank}(G)\}$, are nonamenable.

\begin{proof}[Proof of Theorem \ref{indicable.amen}] Consider an epimorphism $\pi: G\rightarrow \mathbb{Z}$. The corresponding graph morphism $N_n(G)\rightarrow N_n(\mathbb{Z})$, $n\geq \max\{2,\operatorname{rank}(G)\}$, is a covering map by Lemma \ref{cover}. We conclude by Lemma \ref{Pasc} and Proposition \ref{nonamen}. \end{proof}

\section{Nonamenability of Nielsen graphs of infinite finitely generated elementary amenable  groups}
\label{elementary amenable groups}

We begin by describing the Nielsen graphs of the infinite dihedral group $D_{\infty}=\langle r,s\mid s^2, srs=r^{-1}\rangle=\langle x,y\mid x^2, y^2\rangle$.
% i.\ e., it is isomorphic to $\mathbb{Z}\rtimes \mathbb{Z}/2\mathbb{Z}$ and to $\mathbb{Z}/2\mathbb{Z}\ast \mathbb{Z}/2\mathbb{Z}$.

\begin{cor} The Nielsen graph $N_n(D_{\infty})$ is infinite connected for $n\geq 2$ and is nonamenable for $n\geq 3$. 
\label{dihedral}
\end{cor}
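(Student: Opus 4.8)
The plan is to treat connectedness and nonamenability separately, in both cases using the structure of $D_\infty$ as an extension of $\langle r\rangle\cong\mathbb Z$ by the order-two parity quotient. Call an element \emph{even} if it lies in $\langle r\rangle$ and \emph{odd} (a reflection) otherwise; the odd elements are exactly the involutions $r^ks$, and a product of two odd elements is even. The observation that drives everything is that if a tuple has two odd coordinates $g_i,g_j$, then the single Nielsen move $R_{ij}^{-}$ replaces $g_i$ by the \emph{even} element $g_ig_j^{-1}$, lowering the number of odd coordinates by one.

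For connectedness ($n\geq 2$) I argue by reduction to a fixed tuple. A generating tuple must have at least one odd coordinate, since otherwise all entries lie in $\langle r\rangle$ and generate a proper subgroup. Applying the move above repeatedly I reduce to exactly one odd coordinate and bring it to the first position, reaching $(r^cs,\,r^{a_1},\dots,r^{a_{n-1}})$. As this still generates $D_\infty$ with a single reflection, the even entries must satisfy $\gcd(a_1,\dots,a_{n-1})=1$ (otherwise $\langle r^cs,r^{a_1},\dots,r^{a_{n-1}}\rangle$ omits $r$ and is proper). The Nielsen moves among coordinates $2,\dots,n$ act on $(a_1,\dots,a_{n-1})$ exactly as the moves of $N_{n-1}(\mathbb Z)$ on $\langle r\rangle\cong\mathbb Z$, so by connectedness of $N_{n-1}(\mathbb Z)$ (Proposition \ref{nonamen}, i.e.\ Euclid) I bring the even block to $(r,e,\dots,e)$. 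Since $g_2=r$, the moves $R_{12}^{\pm}$ turn $r^cs$ into $r^{c\mp 1}s$, so I can set $c=0$ and land on the fixed tuple $(s,r,e,\dots,e)$. Hence every generating $n$-tuple is Nielsen equivalent to this one, so $N_n(D_\infty)$ is connected; it is infinite because the tuples $(r^cs,r,e,\dots,e)$, $c\in\mathbb Z$, are pairwise distinct vertices.

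For nonamenability ($n\geq 3$) I produce a coarsely dense nonamenable subgraph and apply Lemma \ref{subforest}. Let $X'$ be the subgraph of $N_n(D_\infty)$ whose vertices are the tuples $(r^cs,\,r^{a_1},\dots,r^{a_{n-1}})$ with $c\in\mathbb Z$ and $\gcd(a_1,\dots,a_{n-1})=1$, and whose edges are \emph{only} the Nielsen moves among the last $n-1$ coordinates. Such moves fix the reflection $r^cs$ and act on the even block as the moves of $N_{n-1}(\mathbb Z)$, so $X'$ is a disjoint union, over $c\in\mathbb Z$, of copies of $N_{n-1}(\mathbb Z)$. For $n\geq 3$ we have $n-1\geq 2$, so every copy is nonamenable by Proposition \ref{nonamen}; as all components are isomorphic, $h(X')=h(N_{n-1}(\mathbb Z))>0$ and $X'$ is nonamenable. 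To see that $X'$ is coarsely dense, note the reduction from the connectedness step—pair up the reflections and move the surviving one to the first coordinate—carries an arbitrary generating tuple into $V(X')$ in at most $(n-1)+O(n)$ elementary moves, a bound depending on $n$ only. Since $N_n(D_\infty)$ has uniformly bounded degree (there are $O(n^2)$ Nielsen moves), Lemma \ref{subforest} gives nonamenability.

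The point to get right—and the main obstacle to a naive argument—is that one must \emph{not} try to normalise the reflection $r^cs$ to $s$ when proving coarse density: killing the shift $c$ costs on the order of $|c|$ moves, which is not uniform in the tuple. The remedy is exactly to build the whole $c$-line into $X'$ (allowing every $r^cs$ as first coordinate); then coarse density uses only the reflection-pairing and a coordinate permutation, both of bounded length, while nonamenability is supplied sliceise by the copies of $N_{n-1}(\mathbb Z)$. The remaining checks are routine: that a transposition of two coordinates is a bounded-length word in the elementary moves (so the coarse-density constant is genuinely uniform), and that the disjoint union of isomorphic nonamenable graphs has positive Cheeger constant, so that the proof of Lemma \ref{subforest} applies verbatim to the disconnected $X'$.
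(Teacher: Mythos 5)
Your proof is correct. The nonamenability half is essentially the paper's argument: both take the coarsely dense subgraph $X'$ consisting of tuples whose first coordinate is a reflection and whose remaining coordinates lie in $\langle r\rangle\cong\mathbb Z$, observe that $X'$ is a disjoint union over the first coordinate of copies of $N_{n-1}(\mathbb Z)$ (nonamenable for $n\geq 3$ by Proposition \ref{nonamen}), and conclude with Lemma \ref{subforest}; the paper obtains coarse density by pushing forward to $N_n(D_\infty/\mathbb Z)=N_n(\mathbb Z/2\mathbb Z)$, which is finite connected and hence of bounded diameter, whereas you obtain it by the explicit parity-pairing move $R_{ij}^-$ on two reflections --- the same mechanism, made concrete, and your remark about not normalising the shift $c$ is exactly the point that makes the density constant uniform. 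Where you genuinely diverge is connectedness for $n\geq 2$: the paper invokes the Grushko--Neumann theorem via $D_\infty\cong\mathbb Z/2\mathbb Z\ast\mathbb Z/2\mathbb Z$, while you give a self-contained reduction (kill extra reflections by parity, apply Euclid in the even block via connectedness of $N_{n-1}(\mathbb Z)$, then translate the surviving reflection to $s$ using $R_{12}^{\pm}$ against the coordinate equal to $r$). Your route is more elementary and also yields the infiniteness claim explicitly through the family $(r^cs,r,e,\dots,e)$; the paper's route is shorter and generalises immediately to arbitrary free products. The only steps you should spell out are the standard fact that a coordinate transposition is a bounded word in elementary Nielsen moves, and the observation (consistent with the paper's conventions) that a disjoint union of isomorphic nonamenable graphs has the same positive Cheeger constant, so Lemma \ref{subforest} applies to the disconnected $X'$.
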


\begin{proof}
Recall that $D_{\infty}\cong \mathbb{Z}\rtimes \mathbb{Z}/2\mathbb{Z}$, so that $D_{\infty}/\mathbb{Z}\cong \mathbb{Z}/2\mathbb{Z}$. For any $ (x_1,\dots,x_n)\in N_n(D_{\infty})$ consider its image $(\overline{x}_1,\dots,\overline{x}_n)$ in $N_n(D_{\infty}/\mathbb{Z})$. Obviously $(\overline{x}_1,\dots,\overline{x}_n)$ is at bounded distance from $(\overline{1},\overline{0},\dots,\overline{0})$ in $N_n(D_{\infty}/\mathbb{Z})$. The same Nielsen moves which carry $(\overline{x}_1,\dots,\overline{x}_n)$ to $(\overline{1},\overline{0},\dots,\overline{0})$ will carry $(x_1,\dots,x_n)$ to $(x,y_1,\dots,y_{n-1})$ in $N_n(D_{\infty})$ for some $ x \in D_{\infty}$ and $y_i\in \mathbb{Z}$, $1\leq i\leq n-1$, such that at least one of $y_i$ is not equal to $0$ in $\mathbb{Z}$ (because $D_\infty$ is of rank 2). 

For each $x\in D_{\infty}$ and $y_1,\dots,y_{n-1} \in \mathbb{Z}$ such that $\langle x,y_1,\dots,y_{n-1}\rangle =D_{\infty}$, denote  by
$\phi_{x}$ the map from  $N_{n-1}(\mathbb{Z})$ to $N_n(D_{\infty})$ induced by the map on the vertices that sends $(y_1,\dots,y_n)$ to $(x, y_1,\dots,y_n)$.

Denote by $X'$ the subgraph $\sqcup_x \phi_x(N_{n-1}(\mathbb{Z}))$ of $N_n(D_{\infty})$. Every vertex in $N_n(D_{\infty})$ is at uniformly bounded distance from some vertex in $X'$ by the remark above. It follows from Proposition \ref{nonamen} that $X'$ is nonamenable for $n\geq 3$. By Lemma \ref{subforest} we conclude that $N_n(D_{\infty})$, $n\geq 3$, is nonamenable. 

To show connectedness of $N_n(D_{\infty})$ for $n\geq 2$ we recall that $D_\infty\cong \mathbb{Z}/2\mathbb{Z}\ast \mathbb{Z}/2\mathbb{Z}$ and evoke Grushko-Neumann's theorem \cite{Grus,Ne43} about Nielsen graphs of free products. 
\end{proof}

The picture below represents a finite fragment of the (infinite) Nielsen graph $N_2(D_{\infty})$ constructed using Mathematica 9.
\begin{figure}[h]
  \centering
\includegraphics[scale=0.8]{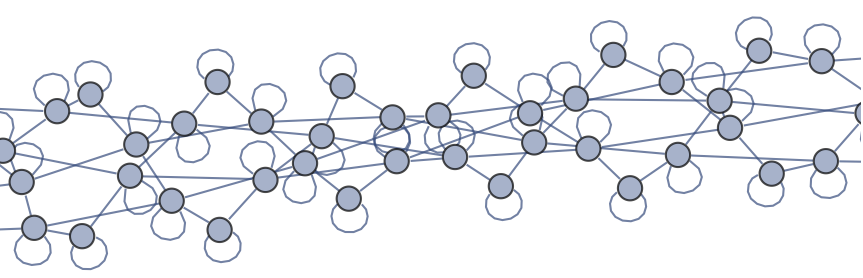}
  \caption{A finite fragment of $N_2(D_{\infty})$.}
\end{figure}
 
\begin{prop} The Nielsen graph $N_2(D_{\infty})$ is quasi-isometric to a line. In particular, it is amenable.
\end{prop}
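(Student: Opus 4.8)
The plan is to show that $N_2(D_\infty)$ is quasi-isometric to $\mathbb{Z}$ by directly understanding the structure of its generating pairs. Since $D_\infty \cong \mathbb{Z} \rtimes \mathbb{Z}/2\mathbb{Z}$, I would write every element uniquely as $r^k$ or $r^k s$ where $r$ generates the infinite cyclic normal subgroup and $s$ is an involution with $srs = r^{-1}$. First I would classify which pairs $(a,b) \in D_\infty^2$ actually generate $D_\infty$. A pair generates $D_\infty$ if and only if it surjects onto the abelianization $D_\infty^{\mathrm{ab}} \cong (\mathbb{Z}/2\mathbb{Z})^2$ and the two elements together reach all of the $\mathbb{Z}$ part; concretely, at least one of the two elements must be a reflection $r^k s$, and the subgroup they generate must contain a nontrivial translation. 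I expect the generating pairs to fall into a small number of types: (reflection, reflection), (reflection, translation), and to work out exactly when each type generates.

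Next I would argue that the Nielsen graph is effectively one-dimensional. The idea is to exhibit a coarse "height" or invariant on vertices taking values in $\mathbb{Z}$ (or $\mathbb{N}$) whose fibers are uniformly finite, and such that a single Nielsen move changes this invariant by a bounded amount while the graph is connected (connectedness is already given by Corollary \ref{dihedral} for $n \geq 2$). A natural candidate for this invariant is built from the translation lengths appearing in the pair: for instance, if the pair is carried to the form $(r^k s, r^m s)$, the product of two reflections $r^k s \cdot r^m s = r^{k-m}$ is a translation, and $|k - m|$ (or a suitable reduced version of it) should serve as the quasi-isometry invariant. I would verify that Nielsen moves act on this invariant essentially like the moves on $N_2(\mathbb{Z})$ restricted to the translation coordinate, but crucially with the constraint forcing the pair to stay two-generated, which collapses the extra dimension.

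The key technical step is to show that, unlike $N_n(\mathbb{Z})$ for $n \geq 2$, the generating constraint here is so rigid that the set of generating pairs modulo bounded neighborhoods is a single quasi-line rather than a tree of growing degree. Concretely I would show that from any generating pair there is essentially a unique direction in which the translation-length invariant can decrease, so that the graph contains no branching of the exponentially-growing kind exploited in Proposition \ref{nonamen}; this is what distinguishes $N_2(D_\infty)$ (amenable) from $N_3(D_\infty)$ (nonamenable). The main obstacle, I expect, will be handling the involutive and torsion phenomena cleanly: the moves $I_j$ and the relations $s^2 = 1$, $(r^k s)^2 = 1$ mean that inversions act trivially or nearly trivially on reflections, and multiple distinct pairs map to the same height, so I must carefully bound the fiber sizes and the displacement of the invariant under all moves $R_{ij}^\pm$, $L_{ij}^\pm$, $I_j$ simultaneously to establish the quasi-isometry with explicit multiplicative and additive constants.

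Once the quasi-isometry $N_2(D_\infty) \to \mathbb{Z}$ is established, amenability is immediate: a line is amenable, and amenability (vanishing of the Cheeger constant, equivalently $\rho = 1$) is a quasi-isometry invariant for graphs of uniformly bounded degree, so the "in particular" clause follows at once.
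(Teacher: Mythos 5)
Your starting point---classifying the generating pairs of $D_\infty$ and extracting a $\mathbb{Z}$-valued coarse coordinate---is viable and is, implicitly, what the paper does; but two concrete steps in your outline would fail. First, your proposed invariant is degenerate. Writing elements as $r^k$ or $r^ks$, a pair of reflections $(r^ks,r^ms)$ generates $D_\infty$ if and only if $|k-m|=1$ (the translations in $\langle r^ks,r^ms\rangle$ form exactly $\langle r^{k-m}\rangle$), so the quantity $|k-m|$ is identically $1$ on the vertex set and cannot be the coordinate of a quasi-isometry to $\mathbb{Z}$; for the mixed pairs $(r^ks,r^{\pm1})$ it is not even defined. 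The correct coarse coordinate is the position $k$ of the reflection along the line, not a difference of translation lengths. Relatedly, the ``concrete'' form of your generation criterion (one reflection present and the subgroup contains a nontrivial translation, plus surjecting onto the abelianization) is only necessary: $(s,r^3s)$ surjects onto $(\mathbb{Z}/2\mathbb{Z})^2$ and contains the translation $r^3$, yet generates an index-$3$ subgroup.

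Second, the properties you propose to verify---uniformly finite fibers of the height function, bounded displacement along edges, and a unique descending direction---do not imply quasi-isometry to a line. They do give linear growth and hence amenability (so the ``in particular'' clause of the statement is safe on your route even without the quasi-line), but two disjoint copies of $\mathbb{Z}$ joined by a single edge at the origin, with $f$ the integer coordinate on each copy, satisfies all three conditions and has four ends. What is additionally needed is that each fiber has uniformly bounded diameter \emph{in the graph metric}: the distinct families of generating pairs lying over the same height $n$ (in the paper's notation, the two strips $(ab^n,b^{\pm1})$ and $(b^{\pm1},ab^n)$) must be joined by paths of bounded length \emph{for every} $n$, not just near a basepoint. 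This is precisely what the explicit Nielsen-move identities in the paper's proof establish, together with a degree count showing that no further vertices or edges have been overlooked; some computation of this kind is unavoidable and is the main missing ingredient in your plan.
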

\begin{proof}
Let $D_{\infty}=\langle a,b\mid a^2, aba=b^{-1}\rangle$. We consider the subgraph $\Gamma$ of $N_2(D_{\infty})$ whose vertex set coincides with the vertex set of $N_2(D_{\infty})$, keeping only the edges labeled by $R_{ij}(=R_{ij}^{+})$ and $I_j$, $i\neq j, 1\leq i,j\leq 2$. $\Gamma$ is a directed graph of vertex degree $8$ with loops. 
 Since $\operatorname{Aut}F_2=\langle  \{R_{ij}, I_j, i\neq j, 1\leq i,j\leq 2\}\rangle$, then $$\Gamma=Sch(\operatorname{Aut}F_2, \operatorname{Epi}(F_2, D_{\infty}), \{R_{ij}, I_j, i\neq j, 1\leq i,j\leq 2\}).$$ It follows that $\Gamma$ is quasi-isometric to $N_2(D_{\infty})$. %To prove the proposition we show that $\Gamma$ is amenable.

\medskip

Observe that the infinite strip on Figure $3$ is a subgraph of $\Gamma$.
\begin{figure}[h]
  \centering
\includegraphics{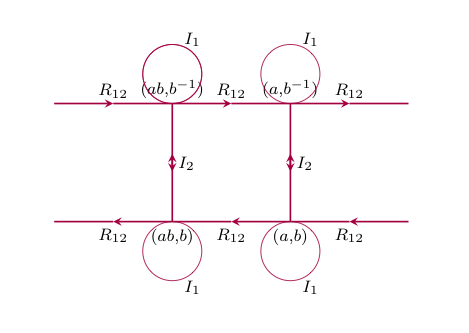}
 \caption{}
\end{figure}
Notice that all vertices on this strip are of the form $(ab^n, b^{\pm 1})$, $n\in \mathbb{Z}$, and each vertex has a loop labeled $I_1$. Indeed, $(ab)^2=abab=b^{-1}b=1$ and by induction $(ab^n)^2=ab^{n-1} aa bab^n=ab^{n-1}ab^{n-1}=(ab^{n-1})^2$.

Observe that also the infinite strip on Figure $4$ is a subgraph of $\Gamma$.
\begin{figure}[h]
  \centering
\includegraphics{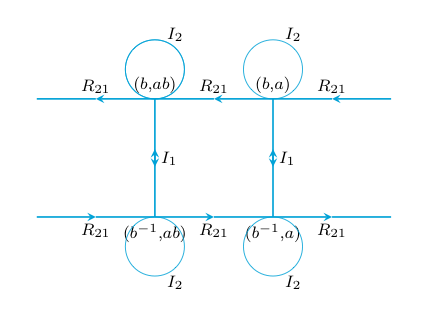}
 \caption{}
 \end{figure}
Notice that all vertices on this strip are of the form $(b^{\pm 1}, ab^n)$, $n\in \mathbb{Z}$, and each vertex has a loop labeled $I_2$. 

For any $n\in \mathbb{Z}$, $n\geq 0$, the following equalities hold:
\begin{flalign*} R_{12}R_{21}R_{12}^n(a,b)=R_{21}I_1 R_{21}^n(b,a), \\R_{21} R_{12}R_{21} I_1 R_{21}^n(b,a)=R_{12}^n (a,b), \\R_{12}I_2R_{12}^n (a,b)=R_{21}R_{12}R_{21}^n(b,a), \\R_{21}^n (b,a)= R_{12}R_{21}R_{12}I_1 R_{12}^n (a,b), \\R_{12}R_{21}(I_2R_{12}I_2)^n(a,b)=R_{21}I_1 (I_1R_{21}I_1)^n(b,a),
\\R_{21} R_{12}R_{21} I_1  (I_1R_{21}I_1)^n (b,a)=(I_2R_{12}I_2)^n (a,b), \\R_{12}I_2(I_2R_{12}I_2)^n (a,b)=R_{21}R_{12}(I_1R_{21}I_1)^n(b,a), \\(I_1R_{21}I_1)^n (b,a)= I_1R_{12}R_{21} (I_2R_{12}I_2)^{n-1} (a,b)
\end{flalign*}
We are now able to see how these two strips are connected in $\Gamma$ (see Figure $5$).

\begin{figure}[h]
  \centering
\includegraphics{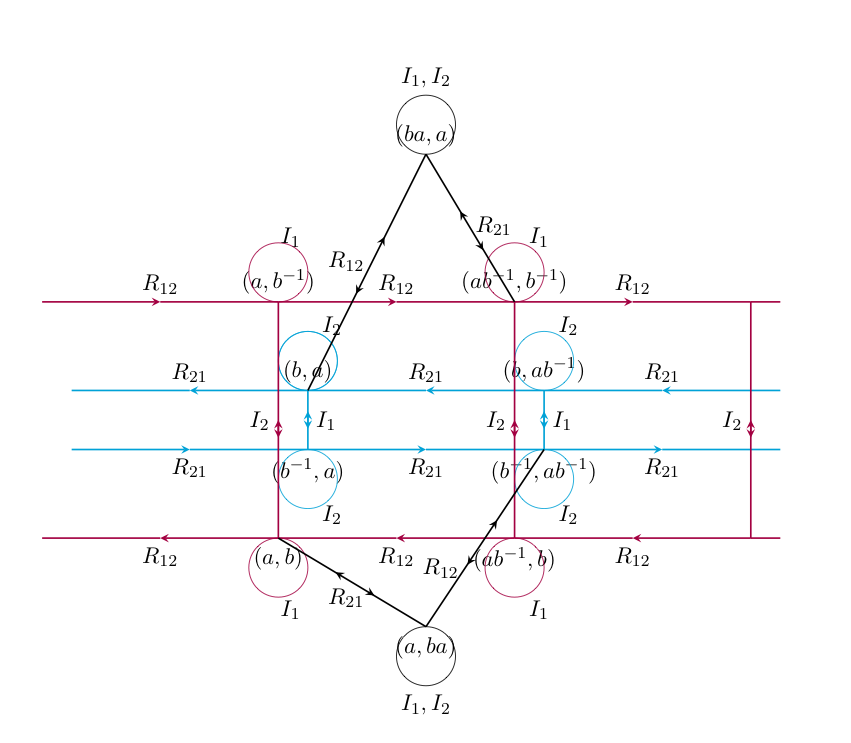}
  \caption{}
\end{figure}

\noindent Observe that for any $n\in \mathbb{Z}$, $n\geq 0$, the vertices $R_{21}R_{12}^n(a,b)$, $R_{12}R_{21}^n(b,a)$, $R_{21}(I_2R_{12}I_2)^n(a,b)$ and $R_{12}(I_1R_{21}I_1)^n(b,a)$ have loops labeled $I_1, I_2$.
The calculations above show that the graph spanned by the vertices $(ab^n, b^{\pm 1})$, $(b^{\pm 1}, ab^n)$ for $n\in \mathbb{Z}$; $R_{21}R_{12}^n(a,b)$, $R_{12}R_{21}^n(b,a)$, $R_{21}(I_2R_{12}I_2)^n(a,b)$ and $R_{12}(I_1R_{21}I_1)^n(b,a)$ for $n\in \mathbb{Z}$ and $n\geq 0$, is regular of degree $8$. Since it is a subgraph of $\Gamma$ of full degree and $\Gamma$ is connected we conclude that it coincides with $\Gamma$.

Notice that the vertices $R_{21}R_{12}^n(a,b)$, $R_{12}R_{21}^n(b,a)$, $R_{21}(I_2R_{12}I_2)^n(a,b)$ and $R_{12}(I_1R_{21}I_1)^n(b,a)$ are at distance $1$ from either $(ab^{\pm n}, b^{\pm 1})$ or $(b^{\pm 1}, ab^{\pm n})$. Thus we deduce that the graph $\Gamma$ is quasi-isometric to the line.

%The final step is to prove that $\Gamma$ is amenable. Let $$S_1=\{ (a,b^{\pm 1}), (ab^{- 1}, %b^{\pm 1}), (b^{\pm 1}, a), (b^{\pm 1}, ab^{-1}), (ba,a), (a,ba)\}$$ and $$S_n=S_{n-1}\cup \{ %(ab^{n-1}, b^{\pm 1}), (ab^{-n}, b^{\pm 1}),$$ $$ (b^{\pm 1}, ab^{n-1}), (b^{\pm 1}, ab^{-n}), %(ab^{n-2}, ab^{n-1}), (ab^{n-1},ab^{n-2}), (ab^{-n}, ab^{-n+1}), (ab^{-n+1}, ab^{-n}) \}. 
%$$ 

%Then $|\partial_{\Gamma}(S_n)|=8$ and $|S_n|=12(n-1)+10$. We deduce that the isoperimetric %constant $h(\Gamma)=0$.
\end{proof}

The following Proposition will be used in the proof of Theorem \ref{elemamen}.

\begin{prop} Let $H$ be a group that contains a normal subgroup isomorphic to $\mathbb{Z}^d$, $d\geq 1$, of finite index $i>1$. Then all Nielsen graphs $N_n(H)$ are nonamenable for $n\geq \operatorname{rank}(H)+\log_2 i +1$.
\label{finiteindex}
\end{prop}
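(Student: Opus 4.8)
The plan is to adapt the argument used for the infinite dihedral group in Corollary \ref{dihedral}. Write $K\cong\mathbb{Z}^d$ for the given normal subgroup of finite index $i$, put $Q:=H/K$ (a finite group of order $i$), and let $\pi\colon H\to Q$ be the quotient map. Since every elementary Nielsen move commutes with $\pi$, a reduction of $\pi(v)=(\pi(x_1),\dots,\pi(x_n))$ inside $Q$ by Nielsen moves lifts to the same moves applied to $v=(x_1,\dots,x_n)$. I would use this to move a bounded (in terms of $n$ and $i$ only) distance from any vertex $v$ of $N_n(H)$ to a vertex all but few of whose coordinates lie in $K$, and then exhibit inside $N_n(H)$ a nonamenable subgraph which is uniformly dense, so that Lemma \ref{subforest} applies.

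The key is an elementary reduction in the finite group $Q$, and this is exactly where $\log_2 i$ appears. Given $\pi(v)=(\bar x_1,\dots,\bar x_n)$, whenever a coordinate $\bar x_j$ lies in the subgroup generated by the other coordinates I express $\bar x_j^{-1}$ as a word in them and apply the right-multiplication moves $R_{jl}^{\pm}$, which alter only the $j$-th entry, to turn $\bar x_j$ into the identity. Iterating and collecting the trivial entries at the end, one arrives at a generating tuple whose nontrivial entries $\bar y_1,\dots,\bar y_t$ satisfy $\langle \bar y_1\rangle\lneq\langle\bar y_1,\bar y_2\rangle\lneq\cdots\lneq\langle\bar y_1,\dots,\bar y_t\rangle=Q$. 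Each proper inclusion at least doubles the order, so $2^t\le|Q|=i$ and $t\le\log_2 i$. Because each killing step uses at most $|Q|$ moves, and there are at most $n$ coordinates together with $O(n^2)$ transpositions needed to order them, the whole reduction, followed by a permutation placing $\ell:=n-\lceil\log_2 i\rceil$ of the trivial entries in the last $\ell$ positions, is realized by a number $D=D(n,i)$ of Nielsen moves independent of $v$. Note that $\ell\ge\operatorname{rank}(H)+1\ge 2$, since $n$ is an integer with $n\ge\operatorname{rank}(H)+\log_2 i+1$, whence $n\ge\operatorname{rank}(H)+1+\lceil\log_2 i\rceil$.

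Lifting, the resulting vertex $v'$ has its last $\ell$ coordinates in $K\cong\mathbb{Z}^d$. Let $X'$ be the subgraph of $N_n(H)$ whose vertices are the generating $n$-tuples whose last $\ell$ coordinates lie in $K$ and generate a nontrivial subgroup, and whose edges are the Nielsen moves among the last $\ell$ positions only. Such a move keeps these entries in $K$ and preserves the subgroup $L$ they generate, so the tuple stays generating and stays in $X'$; hence each connected component of $X'$ is an $\operatorname{Aut}F_\ell$-orbit, namely the set of generating $\ell$-tuples of a fixed subgroup $L\cong\mathbb{Z}^e$ (with $1\le e\le\ell$) and the first $n-\ell$ coordinates frozen, and is therefore isomorphic to $N_\ell(\mathbb{Z}^e)$. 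Since $\mathbb{Z}^e$ is indicable and $\ell\ge\max\{2,e\}$, Theorem \ref{indicable.amen} makes each such component nonamenable; as there are only finitely many isomorphism types ($1\le e\le\ell$), $X'$ is nonamenable. I would guarantee $v'\in X'$ (i.e.\ that the surviving span $L$ is nontrivial) by observing that otherwise $H$ would be generated by the $t\le\log_2 i$ nontrivial coordinates alone, in which case a single extra move replacing a trivial entry by a nonzero element of $K$ repairs this at bounded cost.

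By the reduction, every vertex of $N_n(H)$ is within distance $D$ of $X'$. Applying Lemma \ref{subforest} separately to each connected component $C$ of $N_n(H)$ (each infinite and of uniformly bounded degree) with the nonamenable, $D$-dense subgraph $X'\cap C$ shows that every component is nonamenable, and hence so is $N_n(H)$. The step I expect to require the most care is the uniform control of the reduction in $Q$: one must verify that the chain reduction always terminates in a tuple with at most $\log_2 i$ nontrivial coordinates, that its length is bounded by $D(n,i)$ independently of the starting tuple, and that the surviving $K$-coordinates can always be arranged to span a nontrivial subgroup, so that the embedded graphs $N_\ell(\mathbb{Z}^e)$ are genuinely infinite and nonamenable rather than single points. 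Once these points are secured, nonamenability follows formally from Lemma \ref{subforest} and Theorem \ref{indicable.amen}.
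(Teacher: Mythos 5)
Your proposal is correct and follows essentially the same route as the paper: reduce an arbitrary vertex, within a distance bounded in terms of $n$ and $i$ only, to one whose trailing coordinates lie in the normal copy of $\mathbb{Z}^d$, recognize the resulting uniformly dense subgraph as a disjoint union of finitely many isomorphism types of nonamenable graphs $N_\ell(\mathbb{Z}^e)$, and conclude with Lemma \ref{subforest}. The only substantive difference is that you prove the finite-quotient reduction by hand via the subgroup-doubling chain (which is where $\log_2 i$ enters), whereas the paper simply cites Pak's connectivity theorem for Nielsen graphs of finite groups when $n\geq \operatorname{rank}(H/K)+\log_2 i$; note also that inserting a nontrivial element of $K$ into a trivial slot generally costs up to $i$ Nielsen moves rather than the ``single extra move'' you mention, which is harmless since only boundedness is needed.
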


\begin{proof}
Denote by $Q$ a normal subgroup of $H$ isomorphic to $\mathbb{Z}^d$, $d\geq 1$, and denote
 by $F=H/Q$ the finite quotient of $H$. Denote also by $r=\operatorname{rank}(F)\leq \operatorname{rank}(H)$. 
For any $ (x_1,\dots,x_n)\in N_n(H)$ consider its image $(\overline{x}_1,\dots,\overline{x}_n)$ in $N_n(F)$. The Nielsen graph $N_n(F)$ is obviously finite, and it is  connected \cite[Prop. 2.2.2]{Pak} for $n\geq r+\log_2 i$. Hence
 $(\overline{x}_1,\dots,\overline{x}_n)$ is at bounded distance from $(\overline{a_1},\overline{a_2},\dots,\overline{a_r}, \overline{1},\dots,\overline{1})$ in $N_n(F)$ for $a_1,...a_r\in H$,  such that $\langle \overline{a_1},\overline{a_2},\dots,\overline{a_r}\rangle=F$. The Nielsen moves that carry $(\overline{x}_1,\dots,\overline{x}_n)$ to $(\overline{a_1},\overline{a_2},\dots,\overline{a_r}, \overline{1},\dots,\overline{1})$ in $N_n(F)$ will carry $(x_1,\dots,x_n)$ to $(a_1y_1, a_2y_2,\dots,a_ry_r, y_{r+1},\dots,y_n)$ in $N_n(H)$ for some $(y_1,\dots,y_n)\in Q^n$. 

If $y_m=1$ for all $r+1\leq m \leq n$ then $\langle a_1y_1, a_2y_2,\dots,a_ry_r\rangle=H$. A sequence of elementary Nielsen moves $R_{r+1, 1},\dots, R_{r+1, r}$ applied to $(a_1y_1, a_2y_2,\dots,a_ry_r, 1,\dots,1)$ corresponds to a path in the Cayley graph of $H$ with generators $\{a_1y_1, a_2y_2,\dots,a_ry_r\}$. Since $Q$ is of index $i$ in $H$, the ball of radius $i$ around any vertex in this Cayley graph contains at least one vertex representing an element of $Q$. We can therefore conclude that $(a_1y_1, a_2y_2,\dots,a_ry_r, 1,\dots,1)$ is within at most $i$ steps from $(a_1y_1, a_2y_2,\dots,a_ry_r, z,1,\dots,1)$ in $N_n(H)$, with $z\in Q$ and $z\neq 1$.

For each $\hat{h}=(h_1,\dots,h_r) \in H^r$ such that $\langle h_1,\dots,h_r,y_{r+1},\dots,y_{n}\rangle =H$  and $y_m\neq 1$ for some $r+1\leq m\leq n$, denote by
$\phi_{\hat{h}}$ the morphism from the graph $N_{n-r}(\langle y_{r+1},\dots,y_{n} \rangle)$ to the graph $N_n(H)$ induced by the map on the vertices that sends
$(z_{r+1},\dots,z_n)$ to $(h_1,\dots,h_r, z_{r+1},\dots,z_n)$.

As a nontrivial subgroup of $Q$, $\langle y_{r+1},\dots,y_{n} \rangle$ is isomorphic to $\mathbb{Z}^s$ for some $s\geq 1$. Notice that $n-r\geq 2$ and clearly $n-r\geq \operatorname{rank}(\langle y_{r+1},\dots,y_n\rangle)$. We use Lemma \ref{cover} and Proposition \ref{nonamen} to deduce that $N_{n-r}(\langle y_{r+1},\dots,y_{n} \rangle)$ is nonamenable. 

Denote by $X'$ the subgraph $\sqcup_{\hat{h}} \phi_{\hat{h}}(N_{n-r}(\langle y_{r+1},\dots,y_r\rangle))$ of $N_n(H)$. Every vertex in $N_n(H)$ is at uniformly bounded distance from some vertex in $X'$ by the first paragraph of the proof. Moreover $X'$ is nonamenable since each $\phi_{\hat{h}}(N_{n-r}(\langle y_{r+1},\dots,y_r\rangle))$ is nonamenable. By Lemma \ref{subforest} we conclude that $N_n(H)$ is nonamenable for $n\geq \operatorname{rank}(H)+\log_2 i +1$. 

\end{proof}

Denote by $EG$ the class of \emph{elementary amenable} groups, i.\ e., the smallest class of groups containing finite groups and abelian groups, which is closed with respect to taking subgroups, quotients, extensions and direct limits.

For each ordinal $\alpha$ define inductively a subclass $EG_{\alpha}$ of $EG$ in the following way. $EG_0$ consists of finite groups and abelian groups. If $\alpha$ is a limit ordinal then $$EG_{\alpha}=\bigcup_{\beta< \alpha} EG_{\beta}.$$ Further, $EG_{\alpha+1}$ is defined as  the class of groups which are extensions of groups from the set $EG_{\alpha}$ by groups from the same set. Each of the classes $EG_{\alpha}$ is closed with respect to taking subgroups and taking quotients \cite{Chou}. \emph{The elementary complexity} of a group $G\in EG$ is the smallest $\alpha$ such that $G\in EG_{\alpha}$.

\medskip

Recall that a group is {\it just-infinite} if it is infinite and all its non-trivial normal subgroups are of finite index. A just-infinite group $G$ is \emph{hereditary just-infinite} if it is residually finite and every subgroup $M < G$ of finite index is just-infinite.
The proof of Theorem \ref{elemamen} is based on the following trichotomy for finitely generated just-infinite groups:

\begin{thm}[\cite{G00a}]
Any finitely generated just-infinite group is either branch, or contains a normal subgroup of finite index which is isomorphic to the direct product of a finite number of copies of a group $L$, where $L$ is either simple or hereditarily just infinite.
\label{justinfinite}
\end{thm}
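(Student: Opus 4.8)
The plan is to prove this trichotomy through an analysis of the lattice of subnormal subgroups of $G$, following the structure theory of Wilson on which Grigorchuk's argument rests. The starting observation is that just-infiniteness forces every nontrivial normal subgroup, and in particular the normal closure $H^G$ of every nontrivial subnormal subgroup $H$, to have finite index in $G$. The whole argument will then be organized around a single subnormal ``building block'' and the way its $G$-conjugates fit together.

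First I would locate a good building block: I would choose a nontrivial subnormal subgroup $L$ whose subnormal defect (the length of a shortest subnormal series from $L$ up to $G$) is as small as possible; the case where this defect equals $1$, i.e.\ all nontrivial subnormal subgroups are already normal, can be treated separately and more easily. The key lemma to establish is that, for such a minimal $L$, any two distinct $G$-conjugates $L^g, L^{g'}$ intersect trivially and commute: one shows that $[L^g, L^{g'}]$ is again subnormal and lies inside both conjugates, and then uses minimality of the defect together with just-infiniteness to force it to be trivial. Granting this, the normal closure
$$K := L^G = \prod_g L^g$$
is a direct product of the conjugates of $L$, on which $G$ acts by conjugation, permuting the factors transitively (they are all $G$-conjugate to $L$). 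Since $K$ is a nontrivial normal subgroup, it has finite index in $G$.

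The trichotomy then emerges from a clean dichotomy on the number of factors. If there are only finitely many conjugates, then $K\cong L^{m}$ is a finite-index normal subgroup isomorphic to a direct power of $L$, which is exactly the second alternative of the theorem; it remains to identify $L$ as simple or hereditarily just infinite. Here I would first argue that $L$ is itself just infinite: a proper nontrivial normal subgroup $M\lhd L$ of infinite index would have normal closure $M^{G}\le K$ meeting each factor in a conjugate of $M$, hence would be a proper nontrivial normal subgroup of $G$ of infinite index, contradicting just-infiniteness. Then I would split cases: if $L$ admits no proper nontrivial normal refinement at all it is simple, while if every finite-index subgroup of $L$ is again just infinite it is hereditarily just infinite. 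The remaining possibility, that some finite-index subgroup of $L$ carries a further nontrivial direct-product decomposition, is excluded in this branch of the argument because such a decomposition would refine $K$, contradicting the finiteness of the chosen layer. If instead there are infinitely many conjugates, I would iterate the construction inside each factor, producing a nested, refining sequence of finite-index direct-product decompositions whose number of factors tends to infinity, with $G$ permuting the factors transitively at every level; checking the compatibility of these decompositions yields precisely the algebraic definition of a branch group, the first alternative.

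The main obstacle I anticipate is exactly the commutation lemma and the sharp finite-versus-infinite dichotomy, that is, the structure-lattice machinery guaranteeing that the conjugates of a suitably chosen subnormal subgroup generate a genuine direct product and that the resulting decomposition either terminates (the near-simple case) or refines indefinitely (the branch case). Controlling the possibly infinite family of conjugates, and then verifying in the branch case that all the axioms of the algebraic branch definition hold — finite index of the successive layers, compatible nesting, and transitive permutation of the factors — is where the delicate work lies. By contrast, the final identification of the building block $L$ as simple or hereditarily just infinite should be comparatively routine once the dichotomy has been set up.
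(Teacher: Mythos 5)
First, a point of order: the paper does not prove Theorem \ref{justinfinite} at all --- it is quoted with a citation from Grigorchuk \cite{G00a}, whose proof rests on Wilson's structure theory of just-infinite groups (basal subgroups and the structure lattice). So there is no in-paper argument to compare against; your sketch has to be measured against that source. Your overall strategy --- locate a subnormal building block $L$ whose conjugates generate a direct product, and read the trichotomy off the resulting decomposition --- is indeed the Wilson--Grigorchuk strategy, but two of your pivotal steps are wrong as stated.

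(1) The dichotomy ``finitely many versus infinitely many conjugates of $L$'' is vacuous. Once $K=L^G=\prod_g L^{g}$ really is a direct product of the conjugates and has finite index in the finitely generated group $G$, then $K$ is itself finitely generated, and a finitely generated group cannot be a (restricted) direct product of infinitely many nontrivial factors: each generator has finite support. So the number of conjugates is automatically finite and your branch case is empty as you have set it up. The true dichotomy is not about the number of factors at one level but about whether the refinement terminates: either some basal subgroup is an atom of the structure lattice, and its normal closure yields the second alternative, or every basal subgroup admits a proper basal refinement, producing an infinite strictly descending chain of finite direct-product decompositions --- that is what characterizes the branch case. (2) Your justification of the commutation lemma does not work: the inclusion $[L^{g},L^{g'}]\le L^{g}\cap L^{g'}$ requires the two conjugates to normalize one another, which is exactly the point at issue and does not follow from subnormality or from minimality of the subnormal defect in the direct way you describe; Wilson's actual argument is a careful induction on the defect. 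Lesser but genuine omissions: you never address residual finiteness of $L$, which is part of the definition of ``hereditarily just infinite'' used in the paper; your exclusion of the case where a finite-index subgroup of $L$ decomposes further, by appeal to ``the finiteness of the chosen layer,'' is circular (nothing forbids a further refinement --- it simply means you have not yet reached an atom); and in the branch case you still owe the verification that $G$ acts transitively on the factors at every level and that the nested decompositions assemble into a faithful level-transitive action on a spherically homogeneous rooted tree with finite-index rigid level stabilizers, which is the definition the paper actually uses.
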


\emph{Branch groups} are the groups that have a faithful level transitive action on an infinite spherically homogeneous rooted tree $T_{\bar m}$ defined
 by a sequence $\{m_n\}_{n=1}^{\infty}$ of natural numbers $m_n\geq 2$ (determining the branching number of vertices of level $n$) with the property
 that the rigid stabilizer $rist_G(n)$ has finite index in $G$ for each $n\geq 1$. Here $rist_G(n)$ denotes the product $\prod_{v\in V_n}rist_G(v)$
  of rigid stabilizers $rist_G(v)$ of all vertices on the $n$-th level of the tree, where $rist_G(v)<G$ is the subgroup of elements fixing the vertex $v$ and acting trivially outside the full subtree  rooted at $v$. For more on branch groups see \cite{Gri00}. The statement of the next Proposition appeared already in \cite{Gr03} but there is no proof of it in the literature, that is why we include a proof here. 
 
\begin{prop}[R.\ I.\ Grigorchuk]
  Let $G$ be a finitely generated branch just-infinite group. Then it does not belong to the class $EG$ of elementary amenable groups.
\label{branch}
 \end{prop}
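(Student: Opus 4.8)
The plan is to show that a finitely generated branch just-infinite group $G$ cannot be elementary amenable by arguing on the elementary complexity and using the defining structural features of branch groups. Recall that every class $EG_{\alpha}$ is closed under subgroups and quotients, so if $G \in EG$ then $G$ has a well-defined elementary complexity $\alpha$, and so does every section of $G$. The key observation I would exploit is that branch groups have a strong self-similar structure: the rigid stabilizer $rist_G(n) = \prod_{v \in V_n} rist_G(v)$ has finite index in $G$ for every $n$, and each factor $rist_G(v)$ is itself a nontrivial normal subgroup of a finite-index subgroup of $G$. Moreover the level action is transitive, so the factors $rist_G(v)$ for $v$ on a fixed level are permuted transitively and hence are all isomorphic.

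First I would set up the contradiction by assuming $G \in EG$ and letting $\alpha$ be its elementary complexity. I would analyze what the bottom of the hierarchy forces. If $\alpha = 0$ then $G$ is finite or abelian; a finitely generated branch group is infinite (the tree is infinite) and cannot be abelian (an abelian just-infinite group is $\mathbb{Z}$, which is not branch), so $\alpha \geq 1$. The heart of the argument is the inductive step: I would show that if $G$ is branch just-infinite with elementary complexity $\alpha$, then $rist_G(v)$ for a first-level vertex $v$, which is a subgroup of $G$, must itself contain a section that is again branch-like of complexity at least $\alpha$, forcing a strictly descending chain of complexities and hence a contradiction with the well-ordering of the ordinals.

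The main obstacle — and the step I would spend the most care on — is making precise the self-similarity that lets the complexity strictly decrease. The clean way is as follows. In a branch group the rigid stabilizer $rist_G(1) = rist_G(v_1) \times \cdots \times rist_G(v_{m_1})$ is a finite-index (hence itself just-infinite-adjacent) normal subgroup, and each $rist_G(v_i)$ embeds as a branch group acting on the subtree rooted at $v_i$. Since $EG_{\alpha}$ is subgroup- and quotient-closed and is closed under extensions only when the layers come from $EG_{\beta}$ with $\beta < \alpha$, a finite direct product $L^k$ lies in $EG_{\alpha}$ exactly when $L$ does, and an extension with infinite kernel and infinite quotient both in $EG$ pushes the complexity strictly up. I would argue that the infinite normal subgroup $N = rist_G(1)$ has infinite index quotient $G/N$ that is itself infinite and elementary amenable, so the complexity of $G$ strictly exceeds that of $N$; but $N$ contains $rist_G(v_1)$, which is again branch on a subtree and so by the same structural property is infinite with its own nontrivial rigid stabilizers. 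Iterating this down the tree produces an infinite strictly decreasing sequence of ordinals $\alpha > \alpha_1 > \alpha_2 > \cdots$, which is impossible.

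To make the extension inequality rigorous I would invoke the standard fact (provable directly from the inductive definition of $EG_{\alpha}$) that if $1 \to N \to G \to Q \to 1$ is exact with $N, Q \in EG$ both \emph{infinite}, then the complexity of $G$ is strictly greater than that of each of $N$ and $Q$; the only subtlety is ensuring both the kernel $rist_G(n)$ and the quotient $G/rist_G(n)$ are infinite, which follows because the branching numbers $m_n \geq 2$ make the levels grow and the rigid stabilizers nontrivial at every level, while $G$ being just-infinite with $rist_G(n)$ of finite index keeps the relevant quotients infinite through the nested construction. The conclusion is that no finite ordinal complexity can be assigned to $G$, so $G \notin EG$, which is exactly the claim of the Proposition.
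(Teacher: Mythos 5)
Your argument has a genuine gap at its core: the mechanism you propose for making the elementary complexity strictly decrease does not work. You write that $N=rist_G(1)$ ``has infinite index quotient $G/N$ that is itself infinite,'' but by the very definition of a branch group the rigid stabilizer $rist_G(n)$ has \emph{finite} index in $G$ for every $n$, so $G/rist_G(1)$ is finite. More seriously, the ``standard fact'' you invoke --- that an extension $1\to N\to G\to Q\to 1$ with $N,Q\in EG$ both infinite forces the complexity of $G$ to be strictly greater than that of $N$ --- is false: $\mathbb{Z}\times\mathbb{Z}$ is an extension of two infinite groups of complexity $0$ and still has complexity $0$. The inductive definition only says that $EG_{\alpha+1}$ \emph{contains} all extensions of $EG_\alpha$-groups by $EG_\alpha$-groups; it gives no lower bound on the complexity of a particular extension. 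Consequently you have no way to conclude that $rist_G(v)$ has complexity strictly smaller than that of $G$, and the descending chain of ordinals never gets started.

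The paper obtains the strict descent from the opposite direction: if $\alpha$ is \emph{minimal} with $G\in EG_\alpha$ and $G$ is finitely generated, then by the definition of $EG_\alpha$ the group $G$ is an extension of some normal subgroup $N\in EG_{\alpha-1}$ by a group in $EG_{\alpha-1}$; one does not get to choose $N$, but just-infiniteness forces $N$ to have finite index. The work then goes into showing that the rigid stabilizers $rist_N(v)$ of this given $N$ are finitely generated, just-infinite, and satisfy a \emph{relaxed} branch condition (bounded number of orbits on levels --- a finite-index subgroup of a branch group need not be branch, a point your sketch also glosses over when you assert that $rist_G(v)$ ``embeds as a branch group'' on the subtree), and since they lie in $EG_{\alpha-1}$ by subgroup-closure, induction on $\alpha$ applies. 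You would need to replace your extension inequality with this minimality argument, and to verify just-infiniteness and the relaxed branch property of the vertex rigid stabilizers, for the proof to go through.
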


 \begin{proof}
 Suppose $G$ is branch. Let $\alpha$ be a minimal ordinal with the property $G\in EG_{\alpha}$. Then $G$ is an extension of a subgroup $N\lhd G$, $N\in EG_{\alpha-1}$
(G cannot be presented as a direct limit of subgroups of smaller elementary complexity since $G$ is finitely generated). As $G$ is just-infinite, $N$ has finite index. It turns out that $N$ is not necessarily a branch group. However it can be shown that $N$ satisfies the definition of a branch group with a single relaxation, namely, that the number of orbits of the action on the levels is uniformly bounded (instead of being equal to $1$ in the original definition). Proposition \ref{branch} will be proven by induction on $\alpha$ for any group satisfying the relaxed branch condition.
 
It is proven in Theorem $4$ \cite{Gri00}, that for each nontrivial normal subgroup $K$  of a branch group $G$ there is $n$ such that $K$ contains the commutator subgroup $(rist_G(n))'$. The same proof essentially works for groups satisfying this relaxed branch condition, one just needs to ``decompose" the tree $T_{\bar m}$ on which the group acts into finitely many invariant subtrees on each of which the action is level transitive and so the restriction of the action to each component is a branch group. As each class $EG_{\beta}$ is closed with respect to taking subgroups or quotients, the group $(rist_G(n))'$ belongs to the class $EG_{\alpha-1}$.

Consider the decomposition $rist_N(n)=\prod_{v\in V_n} rist_N(n)$. For each $v\in V_n$ the corresponding group $M_v=rist_N(v)$ satisfies the relaxed branch condition for the action on a rooted subtree $T_v$ of $T_{\bar m}$. Indeed, for each level $k$ of $T_v$ the number of orbits for the action of $rist_N(v)$ is uniformly bounded by the same constant which bounds the number of orbits of the action of $N$ on $T_{\bar m}$. Rigid stabilizer $rist_{M_v}(k)$ is a subgroup of finite index in $M_v$ as it contains the product $\prod_{u\in V_k(T_v)}rist_N(u)$ where $V_k(T_v)$ denotes the set of vertices of level $k$ in the subtree $T_v$. 

Moreover, each $M_v$ is just-infinite. Indeed, let us suppose that $P_v\lhd M_v$ is a normal subgroup. The group $Q:=\prod_{w\in V_n}P_v^{g_w}$ where elements $g_w \in G$ are chosen in such a way that $P_v^{g_w}$ is a subgroup of $rist_G(w)$, is normal not only in $N$ but also in $G$ and has infinite index. Contradiction.

Therefore $M_v$ is a finitely generated (as a quotient of the finitely generated group $rist_N(n)$) just-infinite group from the class $EG_{\alpha-1}$ that satisfies the relaxed branch condition, which gives us the final contradiction. 
 
 \end{proof}

\begin{proof}[Proof of Theorem \ref{elemamen}] Let $G$ be an infinite finitely generated elementary amenable group. As any infinite finitely generated group, it can be epimorphically mapped onto a finitely generated just-infinite group $\bar G$. The property of being elementary amenable is preserved in homomorphic images, so $\bar G$ is also elementary amenable.

We now use the classification of Theorem \ref{justinfinite}. A finitely generated just-infinite branch group cannot be elementary amenable by Proposition \ref{branch}. An infinite finitely generated simple group cannot be elementary amenable \cite{Chou}, therefore $G$ cannot contain a normal subgroup of finite index which is isomorphic to the direct product of a finite number of copies of a simple group.

 An elementary finitely generated amenable hereditary just-infinite group is isomorphic to either $\mathbb Z$ or to $D_\infty$. See Theorem 5.5 in \cite{Gr12} for a proof of this fact by Y.\ de Cornulier. Hence, any infinite finitely generated elementary amenable group is mapped onto a just-infinite group $H$ that contains a normal subgroup of finite index isomorphic either to $\mathbb Z^d$ or to $D_{\infty}^d$, $d\geq 1$. Moreover $D_{\infty}$ contains $\mathbb{Z}$ as a subgroup of index $2$, so the second case is reduced to the first. The proof is concluded via Lemma \ref{Pasc}, Lemma \ref{cover} and Proposition \ref{finiteindex}.

\end{proof}

\section{Nielsen graphs of relatively-free groups}
\label{Nielsen graphs of relatively-free groups}
A \emph{variety of groups} $\mathcal{B}$ is a class of groups that satisfy a fixed system of relations 
$$\{v=1\}_{v\in \mathcal{V}}$$ 
where $v$ runs through a set $\mathcal{V}$ of finite length freely reduced words in some alphabet $X$, called the laws of the variety. In other words, a group $G$ is in 
$\mathcal{B}$ if and only if all laws $\{v=1\}_{v\in \mathcal{V}}$ hold in $G$ when elements of $G$ are substituted for the letters.

 %The set of laws $\mathcal{V}$ of $\mathcal{B}$ consists of those $v=v(x_1,\dots,x_r)$ that take the value $1$ whenever elements $a_1,\dots,a_r$ of a group $A\in \mathcal{B}$ are substituted for $x_1,\dots,x_r$. 

Examples of varieties of groups include the variety of all groups defined by the empty set of laws, the variety of abelian groups defined by the commutative law $xy=yx$, nilpotent groups of a given nilpotency class, solvable groups of a given derived length and so on.  Another example is the \lq\lq Burnside\rq\rq\ variety of groups of exponent $p$ defined by the law $x^p=1$. By a theorem of Birkhoff \cite{Birk}, a class of groups is a variety if and only if it is closed under taking subgroups, homomorphic images and unrestricted direct products.

Let $\mathcal{B}$ be a variety of groups with the set of laws $\{v=1\}_{v\in \mathcal{V}}$. For an arbitrary group $G$ denote by $\mathcal{V}(G)$ the subgroup of $G$ generated by all values of words $v\in \mathcal{V}$ when elements of $G$ are substituted for letters. The subgroup $\mathcal{V}(G)$ is called the \emph{verbal subgroup} of $G$ defined by $\mathcal{V}$. It is easy to see that $G\in \mathcal{B}$ if and only if $\mathcal{V}(G)=\{1\}$. Verbal subgroups are fully invariant (i.e., invariant by all endomorphisms of the group), in particular characteristic. 

%A subgroup $K$ of a group $G$ is called \emph{characteristic} if $K$ is invariant by all automorphisms of $G$. If the subgroup is characteristic then it is normal, the converse generally does not hold.

%Let $W_{\mu}(x_{\lambda})$, $\mu=1,2,\dots$ is a set of words in the symbols $x_{\lambda}$, $ \lambda=1,2,\dots$. Then the $\{W_{\mu}\}$-\emph{verbal subgroup} $G(W_{\mu})$ of a group $G$ is the subgroup of $G$ generated by all elements of the form $W_{\mu}(g_{\lambda})$, where  $g_{\lambda}$ ranges over $G$. If the subgroup is verbal then it is characteristic, the converse however does not hold.

Every variety $\mathcal{B}$ of groups with the set of laws $\mathcal{V}$ contains for all $d\geq 1$ the \lq\lq relatively free group\rq\rq\ of rank $d$, which is the factor of the free group $F_d$ by its verbal subgroup $\mathcal{V}(F_d)$. Examples of relatively free groups include free groups, free abelian groups, free nilpotent groups $F_{d,c}$ of rank $d$ and nilpotency class $c$, free solvable groups $F_{d,l}$ of rank $d$ and derived length $l$, free Burnside groups $B(d,m)$ of rank $d$ and exponent $p$ and so on.

Let $F_d$ be the free group of rank $d\geq 2$ and let $\mathcal{V}$ be a verbal subgroup of $F_d$. Denote by $G$ the corresponding relatively free group $F_d/\mathcal{V}$. As $\mathcal{V}$ is characteristic, the natural mapping $\pi: F_d\rightarrow G$ induces a homomorphism \begin{equation}\label{rho} \rho: \operatorname{Aut} F_d\rightarrow \operatorname{Aut} G. \end{equation} Elements of the image of $\rho$ are called \emph{tame automorphisms} of $G$. We denote by $T(G)$ the subgroup of tame automorphisms in $\operatorname{Aut} G$. Note that  $T(G)\cong \operatorname{Aut} F_d/\operatorname{Ker}\rho$. Note also that the set $$S=\{\rho(R_{ij}^{\pm}), \rho(L_{ij}^{\pm}), \rho(I_j), 1\leq i,j \leq d, i\neq j)\}$$ of images of elementary Nielsen moves is a generating set of $T(G)$. 

\begin{lemma}
For a relatively free group $G$ there is a bijection between $\operatorname{Aut} G$ and $\operatorname{Epi}(F_d, G)$.
\label{bij}
\end{lemma}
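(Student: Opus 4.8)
The plan is to produce the bijection through the obvious candidate map and to isolate the entire mathematical content into a single intrinsic property of $G$. Write $\pi\colon F_d\to G$ for the canonical projection, with kernel the verbal subgroup $\mathcal V$, so that $G=F_d/\mathcal V$, and define $\Psi\colon\operatorname{Aut}G\to\operatorname{Epi}(F_d,G)$ by $\Psi(\alpha)=\alpha\circ\pi$. Since $\pi$ is surjective, $\alpha\circ\pi$ is an epimorphism for every $\alpha\in\operatorname{Aut}G$, so $\Psi$ is well defined; and it is injective at once, because $\alpha\circ\pi=\beta\circ\pi$ forces $\alpha=\beta$ as $\pi$ is onto. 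Thus the whole statement reduces to surjectivity of $\Psi$: every epimorphism $F_d\to G$ must be of the form $\alpha\circ\pi$ for a (necessarily unique) $\alpha\in\operatorname{Aut}G$.

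The first substantive step is to show that an arbitrary $\phi\in\operatorname{Epi}(F_d,G)$ factors through $\pi$. Here I would use that $\mathcal V$ is verbal, hence generated by all values in $F_d$ of the fixed set of laws of the variety, together with the elementary fact that an epimorphism carries the subgroup generated by all law-values of the domain \emph{onto} the subgroup generated by all law-values of the image. Applying this to $\phi$, whose image is all of $G$, and using that $G$ satisfies the laws (being relatively free in the variety), the target subgroup is trivial; therefore $\phi(\mathcal V)=\{1\}$, i.e.\ $\mathcal V\subseteq\ker\phi$. Consequently $\phi$ descends to a homomorphism $\bar\phi\colon G\to G$ with $\bar\phi\circ\pi=\phi$, and $\bar\phi$ is a surjective endomorphism of $G$ because $\phi$ is onto.

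The main obstacle, and the only nonformal point, is to upgrade the surjective endomorphism $\bar\phi$ to an automorphism; this is exactly where the hypothesis that $G$ be relatively free of finite rank is used. The plan is to invoke the classical fact that a finitely generated relatively free group is Hopfian, so that its surjective endomorphisms are automatically injective. Granting this, $\bar\phi\in\operatorname{Aut}G$ and $\Psi(\bar\phi)=\phi$, which proves surjectivity of $\Psi$. The assignment $\phi\mapsto\bar\phi$ is then a genuine two-sided inverse of $\Psi$: one has $\overline{\alpha\circ\pi}=\alpha$ by uniqueness of the factorization through $\pi$, and $\bar\phi\circ\pi=\phi$ by construction, both verifications being immediate from $\pi$ being an epimorphism. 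I would remark that the argument is insensitive to whether $\operatorname{rank}(G)$ equals $d$ or is strictly smaller, since Hopficity is intrinsic to $G$; the Hopfian property is the single hard input, and everything else is formal manipulation of the verbal/universal structure.
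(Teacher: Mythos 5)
Your proof is correct in structure and, despite the different packaging, turns on exactly the same pivot as the paper's. The paper identifies $\operatorname{Aut}G$ with the (free) $\operatorname{Aut}G$-orbit of a fixed generating $d$-tuple and then asserts that every generating $d$-tuple lies in that orbit because ``$G$ has the same presentation $\langle g_1,\dots,g_d\mid v\in\mathcal V\rangle$ for any generating $d$-tuple''; you instead factor an arbitrary $\phi\in\operatorname{Epi}(F_d,G)$ through $\pi$ (correctly: $\phi$ carries $\mathcal V$ onto the verbal subgroup of the image, which is trivial since $G$ lies in the variety) and then promote the resulting surjective endomorphism $\bar\phi$ to an automorphism by Hopficity. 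These two key steps are equivalent: saying that the kernel of $x_i\mapsto g_i$ equals $\mathcal V$ for every generating $d$-tuple is precisely saying that $G$ admits no proper quotient isomorphic to itself. What your route buys is transparency: you have isolated the single nonformal input, which the paper's ``same presentation'' sentence contains only implicitly. One caution, though: Hopficity of a finite-rank relatively free group is not a formal consequence of relative freeness --- the universal property hands you a right inverse of the surjection $x_i\mapsto g_i$ (so a splitting $G\cong\ker\bar\phi\rtimes G$), never a left one --- so ``classical fact'' deserves either a reference or an argument tailored to the variety at hand (for instance, residual finiteness plus Mal'cev's theorem covers the free polynilpotent examples treated later in the paper, while the Burnside case requires separate justification). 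Since the paper's own proof leans on the identical assertion in disguised form, this is a debt the two arguments share rather than a gap specific to yours.
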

\begin{proof}
Notice that for any $d$-generated group $G=\langle x_1,\dots,x_d\rangle$ there is a natural action of $\operatorname{Aut}G$ on $\operatorname{Epi}(F_d, G)$ by composition, and this action is free. So by fixing a generating $d$-tuple $(x_1,\dots,x_d)$ in $G$ we can map $\operatorname{Aut}G$ bijectively on the $\operatorname{Aut}G$-orbit of $(x_1,\dots,x_d)$.

We will now show that if $G$ is relatively free, then any element of $ \operatorname{Epi}(F_d,G)$ belongs to this orbit.
First, observe that since $\mathcal{V}$ is a verbal subgroup of $F_d$, $G$ has the same presentation $G=\langle g_1,\dots,g_d\mid v\in \mathcal{V}\rangle$ for any generating $d$-tuple $(g_1,\dots, g_d)$. Second, recall that two groups having the same presentation are isomorphic (see \cite{MaKS}, Theorem 1.1). From this we deduce that any generating $d$-tuple $(g_1,\dots,g_n)$ is the image of $(x_1,\dots,x_d)$ by an automorphism of $G$. \end{proof}

\bigskip
We now have the following description of the graph $N_d(G)$. 

\begin{thm} Let $G$ be a relatively free group of rank $d$. Denote by $i\in \mathbb{N}\cup \{\infty\}$ the index of the subgroup $T(G)$ of tame automorphisms in the full group of automorphisms $\operatorname{Aut} G$. Then the Nielsen graph $N_d(G)$ consists of $i$ connected components, each of them isomorphic to the Cayley graph $Cay(T(G),S)$ of $T(G)$ with respect to the set $S$ determined by the elementary Nielsen moves. \label{rel.free}
%$.=\sqcup_{[\operatorname{Aut} G: T(G)]} X_i$, where $X_i\cong Cay(T)$.
\end{thm}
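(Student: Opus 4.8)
The plan is to turn the combinatorial description of $N_d(G)$ into a purely group-theoretic one by identifying its vertex set with $\operatorname{Aut} G$ and then recognizing the Nielsen-move edges as right multiplication by the generators in $S$. First I would fix the canonical quotient map $\pi\colon F_d\to G$ and use Lemma \ref{bij} to identify every vertex of $N_d(G)$, i.e. every $\psi\in\operatorname{Epi}(F_d,G)$, with the unique automorphism $\alpha\in\operatorname{Aut} G$ satisfying $\psi=\alpha\circ\pi$. This makes $V(N_d(G))$ a set in bijection with $\operatorname{Aut} G$.

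Next I would compute how an elementary Nielsen move acts under this identification. A move $\tau\in\operatorname{Aut} F_d$ acts on epimorphisms by precomposition, $\psi\mapsto\psi\circ\tau$. By the very construction of $\rho$ in \eqref{rho}, the automorphism $\rho(\tau)$ is the one induced by $\tau$ on $G=F_d/\mathcal V$, so that $\pi\circ\tau=\rho(\tau)\circ\pi$. Hence if $\psi=\alpha\circ\pi$ then
$$\psi\circ\tau=\alpha\circ\pi\circ\tau=\alpha\circ\rho(\tau)\circ\pi=\bigl(\alpha\cdot\rho(\tau)\bigr)\circ\pi .$$
Thus the move $\tau$ sends the vertex labeled $\alpha$ to the vertex labeled $\alpha\cdot\rho(\tau)$: every elementary Nielsen move is right multiplication by the corresponding element of $S$. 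Since $S$ is symmetric (closed under inversion), the orientation of edges may be disregarded, exactly as for a Cayley graph.

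From here I would read off the two assertions of the theorem. Because $S\subseteq T(G)$, right multiplication keeps a vertex $\alpha$ inside its left coset $\alpha T(G)$; and because $S$ generates $T(G)$, the whole coset $\alpha T(G)$ is reached. Therefore the connected component of $\alpha$ is precisely $\alpha T(G)$, and the number of components equals $[\operatorname{Aut} G:T(G)]=i$. For each coset the left-translation map $\alpha t\mapsto t$ ($t\in T(G)$) is a bijection onto $T(G)$ sending the edge $\alpha t\sim\alpha t s$ to $t\sim ts$ for each $s\in S$; this is exactly a graph isomorphism onto $Cay(T(G),S)$. Hence every component is isomorphic to $Cay(T(G),S)$, which finishes the proof.

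The point requiring care — more a matter of bookkeeping than a genuine obstacle — is keeping the two commuting actions straight: the post-composition action of $\operatorname{Aut} G$ underlying Lemma \ref{bij}, and the pre-composition action of $\operatorname{Aut} F_d$ defining the edges. I would check that $\psi\circ\tau$ is again an epimorphism (so it is still a vertex), and that the identity $\pi\circ\tau=\rho(\tau)\circ\pi$ holds exactly, which is immediate from the definition of $\rho$ on the characteristic subgroup $\mathcal V$. One should also observe that several distinct Nielsen moves may yield the same element of $S$, creating multiple edges or loops, but this is matched identically in $Cay(T(G),S)$, so the isomorphism of graphs (with multiplicities) is preserved.
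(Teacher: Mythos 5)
Your proof is correct and follows essentially the same route as the paper: both rest on Lemma \ref{bij} and the intertwining relation $\pi\circ\tau=\rho(\tau)\circ\pi$, your computation that a Nielsen move acts as right multiplication by $\rho(\tau)$ being equivalent to the paper's observation that the stabilizer of a generating $d$-tuple is the normal subgroup $\operatorname{Ker}\rho$, so that each component is the Cayley graph of $\operatorname{Aut}F_d/\operatorname{Ker}\rho\cong T(G)$ and the components are indexed by the cosets of $T(G)$ in $\operatorname{Aut}G$.
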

\begin{proof}
Let $(g_1,\dots,g_d)$ be a generating $d$-tuple of $G$. Think about it as $\pi(x_1)=g_1, \dots, \pi(x_d)=g_d$ for a free basis $x_1,\dots,x_d$ of $F_d$ and the projection $\pi: F_d\rightarrow G$. Then for any $\sigma\in \operatorname{Aut} F_d$ the action of $\rho(\sigma)$ is given by  $\rho(\sigma)(g_k)=\pi(\sigma(x_k))$ for $1\leq k \leq d$, with $\rho$ defined by  (\ref{rho}).

We consider the action of $\operatorname{Aut} F_d$ on $\operatorname{Epi}(F_d,G)$  and prove that every connected component of the Nielsen graph $N_d(G)$ is $Cay(T(G),S)$. For this we show that $St_{\operatorname{Aut}F_d}(g_1,\dots,g_d)=\operatorname{Ker} \rho$. Assume that $\sigma \in St_{\operatorname{Aut}F_d}(g_1,\dots,g_d)$. It then defines a trivial map on generators and therefore a trivial automorphism of $G$. Hence $\sigma \in \operatorname{Ker}\rho$.
Conversely, if $\sigma \in \operatorname{Ker}\rho$ then $\rho(\sigma)(g_1,\dots,g_d)=(\pi(\sigma(x_1),\dots,\pi(\sigma(x_d))=(\pi(x_1),\dots,\pi(x_d))$ and by definition of action of $\sigma \in \operatorname{Aut}F_d$ on $\operatorname{Epi}(F_d,G)$, as explained in introduction, $\sigma \in St_{\operatorname{Aut}F_d}(g_1,\dots,g_d)$. Since the subgroup $\operatorname{Ker}\rho$ is normal in $\operatorname{Aut}F_d$, we conclude that every connected component of $N_d(G)$ is the Cayley graph $Cay(T(G),S )$.

Assume that two generating $d$-tuples $U_1$ and $U_2$ lie in different connected components of $N_d(G)$, i.e.\, $\forall \sigma \in \operatorname{Aut}F_d$ we have $U_1^{\sigma}\neq U_2$. By Lemma \ref{bij} the tuples $U_1$ and $U_2$ define automorphisms of $G$, namely, $U_1=\varphi_1(g_1,\dots,g_d)$, $U_2=\varphi_2(g_1,\dots,g_d)$ for some $\varphi_1, \varphi_2 \in \operatorname{Aut} G$. Since $U_1$ and $U_2$ are not Nielsen equivalent we have $\rho(\sigma)\varphi_1(g_1,\dots,g_d)\neq \varphi_2(g_1,\dots,g_d)$ for all $\sigma \in \operatorname{Aut}F_d$. Therefore two automorphisms  define two different connected components if and only if they lie in different right cosets of the subgroup $T(G)$ in $\operatorname{Aut} G$. We conclude that the number of connected components is equal to the index $[\operatorname{Aut} G : T(G)]$. \end{proof}

\medskip
We deduce Corollary \ref{rel.free.amen} from Theorem \ref{rel.free}. Namely, for a relatively free group $G$ of rank $d$, the Nielsen graph $N_d(G)$ is connected if and only if all automorphisms of $G$ are tame; in addition, $N_d(G)$ is nonamenable if and only if the group $T(G)$ is nonamenable.

\bigskip

%Observe that Theorem \ref{indicable.amen} and Theorem \ref{rel.free} can be generalized in the following way. 

Let now $H$ be  a quotient of a relatively free group $G$ of rank $d$.  Then every connected component of the Nielsen graph $N_d(H)$ is the Schreier graph $Sch(\rho(\operatorname{Aut}F_d), St_{\rho(\operatorname{Aut}F_d)} (h_1,\dots,h_d), \{\text{Nielsen moves}\})$ for some generating $d$-tuple that belongs to the connected component. (This has been observed in \cite[Prop.1.10]{LubP} for finite groups.) For infinite Nielsen graphs we get the following sufficient condition of nonamenability that replaces the criterion (2) in Corollary \ref{rel.free.amen} for quotients of relatively free groups -- recall the discussion from the introduction about the link between Property (T) of a  group and nonamenability of its infinite Schreier graphs.  

\begin{cor} Let $H$ be a finitely generated group in some variety of groups $\mathcal{B}$. For $d\geq \operatorname{rank}(H)$ denote by $G$ the relatively free group in $\mathcal{B}$ of rank $d$. If the subgroup $T(G)<\operatorname{Aut} G$ of tame automorphisms of $G$ has Property (T), then every infinite component of the Nielsen graph $N_d(H)$ is nonamenable.
\label{cor.3}
\end{cor}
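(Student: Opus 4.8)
The plan is to reduce the corollary to the structural description of the connected components of $N_d(H)$ as Schreier graphs of the tame automorphism group $T(G)$, established in the discussion immediately preceding this statement, and then to apply the standard link between Property (T) and nonamenability of infinite Schreier graphs recalled in the introduction.

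First I would observe that $H$ is a quotient of $G$. Since $d\geq \operatorname{rank}(H)$, the group $H$ is generated by $d$ elements, and as $H$ lies in the variety $\mathcal{B}$ it is a homomorphic image of the free object of rank $d$ in $\mathcal{B}$, namely $G=F_d/\mathcal{V}(F_d)$. Thus $H$ is a quotient of a relatively free group of rank $d$, and the preceding discussion applies: every connected component of $N_d(H)$ is the Schreier graph
$$Sch\bigl(\rho(\operatorname{Aut}F_d),\,St_{\rho(\operatorname{Aut}F_d)}(h_1,\dots,h_d),\,\{\text{Nielsen moves}\}\bigr)$$
for some generating $d$-tuple $(h_1,\dots,h_d)$ belonging to that component. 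Because $\rho(\operatorname{Aut}F_d)=T(G)$ and the set $S$ of images of the elementary Nielsen moves generates $T(G)$, each component is precisely the Schreier graph of the transitive action of $T(G)$ on the orbit $T(G)\cdot(h_1,\dots,h_d)$, equivalently on the coset space $T(G)/St_{T(G)}(h_1,\dots,h_d)$, with respect to $S$.

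Next I would treat an arbitrary infinite component. Its vertex set is then an infinite countable set $X$ carrying a transitive action of $T(G)$ with generating set $S$. The associated quasi-regular unitary representation of $T(G)$ on $\ell^2(X)$ has no nonzero invariant vector, since a $T(G)$-invariant function on the transitive infinite set $X$ would be constant and hence not square-summable. As recalled in the introduction, amenability of a Schreier graph is equivalent to the existence of almost invariant vectors for this representation; but Property (T) of $T(G)$ forbids almost invariant vectors in any unitary representation that has no nonzero invariant vector. Hence the representation on $\ell^2(X)$ admits no almost invariant vectors, and the component is therefore nonamenable.

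I expect the only genuinely delicate point to be this last invocation of the amenability criterion: one must be careful that Property (T) is applied to the group $T(G)$ acting on the infinite set $X$ with no invariant $\ell^2$-vector, which is exactly the situation described in the introduction, and that $S$ really does generate $T(G)$ so that the graph in question is indeed a Schreier graph of $T(G)$ and not merely of some proper subgroup. The identification of the components with Schreier graphs of $T(G)$, already available for quotients of relatively free groups, then carries the rest of the argument, and the conclusion follows.
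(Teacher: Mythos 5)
Your proposal is correct and follows essentially the same route as the paper: the paper's (implicit) proof consists exactly of the observation, made in the paragraph preceding the statement, that each connected component of $N_d(H)$ is a Schreier graph of $\rho(\operatorname{Aut}F_d)=T(G)$ with respect to the images of the Nielsen moves, combined with the remark from the introduction that every infinite Schreier graph of a Property (T) group is nonamenable via the almost-invariant-vectors criterion on $\ell^2(X)$. Your added care about $S$ generating $T(G)$ and the absence of invariant vectors for the quasi-regular representation on an infinite transitive set is exactly the right way to make that remark precise.
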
 

Moreover, Lemma \ref{Pasc} and Lemma \ref{cover} imply the following Corollary (generalizing Theorem \ref{indicable.amen} for $n\geq 3$).

\begin{cor} Let $K$ be a finitely generated group that admits an epimorphism onto a group $H$ belonging to some variety of groups $\mathcal{B}$. Let $d\geq \operatorname{rank}(K)$ and denote by $G$ the relatively free group of rank $d$ in $\mathcal{B}$. If $T(G)$ has Property (T), and if $N_d(H)$ is infinite and connected, then every connected component of $N_d(K)$ is nonamenable.
\end{cor}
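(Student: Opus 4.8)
The plan is to reduce everything to the already-established nonamenability of the relatively free Nielsen graph, together with the covering machinery of Section~\ref{Nonamenability of Nielsen graphs of indicable groups}. First I would record the elementary but crucial observation that $\operatorname{rank}(H)\leq \operatorname{rank}(K)\leq d$, since $H$ is an epimorphic image of $K$. This is exactly what is needed to make Corollary~\ref{cor.3} applicable to $H$ at the level $d$, where $G$ denotes the relatively free group of rank $d$ in $\mathcal{B}$.

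With that in hand, I would invoke Corollary~\ref{cor.3}: as $H$ lies in the variety $\mathcal{B}$, as $d\geq \operatorname{rank}(H)$, and as $T(G)$ has Property~(T), every infinite connected component of $N_d(H)$ is nonamenable. By hypothesis $N_d(H)$ is connected and infinite, so it consists of a single infinite component and is therefore itself nonamenable.

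Next I would transport this nonamenability along the epimorphism. Since $\pi\colon K\twoheadrightarrow H$ with $d\geq \operatorname{rank}(K)$, and since $N_d(H)$ is connected, Lemma~\ref{cover} produces a covering map $\varphi\colon N_d(K)\to N_d(H)$. Restricting $\varphi$ to an arbitrary connected component $C$ of $N_d(K)$, the remark following Lemma~\ref{cover} tells us that $C$ covers some connected component of $N_d(H)$; but $N_d(H)$ has a single component, so $C$ covers the nonamenable graph $N_d(H)$. Lemma~\ref{Pasc} then gives $\rho(C)\leq \rho(N_d(H))<1$, so $C$ is nonamenable. As $C$ was an arbitrary component, every connected component of $N_d(K)$ is nonamenable.

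I do not expect a genuine obstacle here: the statement is essentially a repackaging of Corollary~\ref{cor.3}, Lemma~\ref{cover} and Lemma~\ref{Pasc}. The only point requiring care is that the conclusion concerns \emph{every} connected component of $N_d(K)$ rather than $N_d(K)$ as a whole; one must therefore pass the covering through each component separately and use connectedness of $N_d(H)$ to ensure that the target of each such restricted covering is precisely the single nonamenable graph $N_d(H)$.
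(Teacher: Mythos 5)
Your argument is correct and follows exactly the route the paper intends: the paper offers no separate proof beyond the remark that Lemma~\ref{Pasc} and Lemma~\ref{cover} (together with Corollary~\ref{cor.3} applied to $H$) imply the statement, and your write-up simply fills in those steps, including the componentwise restriction of the covering map and the use of connectedness of $N_d(H)$ to identify its target. Nothing further is needed.
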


\section{Examples}
\label{Examples}
Let us first look at the basic example of relatively free groups: free groups $F_d$ of rank $d\geq 2$. Notice that they are indicable and hence by Theorem \ref{indicable.amen} their Nielsen graphs are nonamenable.  Also Nielsen graphs $N_n(F_d)$ of free groups are connected for all $n\geq d\geq 1$. Indeed, for $d=1$ see Proposition~$2.1$ above. For $d= 2$, by Grushko-Neumann theorem \cite{Grus,Ne43}, any generating $n$-tuple of the free product of two groups $G_1$ and $G_2$ can be obtained from a set of generators, a part of which lies in $G_1$ and the rest lies in $G_2$, by a Nielsen move (see \cite[Section 4.1]{MaKS}). If we let $G_1=G_2=\mathbb{Z}$ then this implies that the Nielsen graph $N_n(F_2)$ of the free group $F_2$ is connected for $n\geq 2$. Similarly $N_n(F_d)$ is connected for $n\geq d$.
%For the free group $F_d$ of rank $d$ the graph $N_d(F_d)$ is connected by Corollary \ref{rel.free.amen} and it is nonamenable since the group $\operatorname{Aut}F_d$ is nonamenable. 

Recall that  a group $G$ is called \textit{polynilpotent} (\cite{Smir}) if it admits a finite normal series $G\geq G_{m_1}\geq G_{m_1,m_2}\geq...\geq 1$ where $G_{m_1}$ is the $m_1$-th member of its lower central series, $G_{m_1,m_2}$ is the $m_2$-th member of the lower central series of the group $G_{m_1}$ and so on. A free polynilpotent group $G=F_d/(F_d)_{m_1,\dots,m_k}$ is indicable and hence by Theorem \ref{indicable.amen} all its Nielsen graphs are nonamenable. 

%Part (2) of Corollary \ref{rel.free.amen} implies that $T(G)$ is nonamenable. In the case of $G$ a free nilpotent group of rank at least $3$, Lubotzky and Pak proved in \cite{LubP} that $T(G)$ has moreover Property (T). Corollary \ref{cor.3} implies:

%\begin{cor} Let $H$ be a finitely generated nilpotent group and let $d\geq \max\{3,\operatorname{rank}(H)\}$. Then every infinite component of the Nielsen graph $N_d(H)$ is nonamenable.
%\label{cor.3}
%\end{cor} 

We consider separately the cases of free abelian, free nilpotent, free (nilpotent of class $2$)-by-abelian, free metabelian and free centre-by-metabelian groups to describe what is known about connectedness of Nielsen graphs of free polynilpotent groups.
 
It is well known that the map $\operatorname{Aut}F_d\rightarrow GL_d(\mathbb{Z})$ is onto, so that all automorphisms of the free abelian groups are tame. 
Moreover, not only $N_d(\mathbb{Z}^d)$ but all Nielsen graphs of free abelian groups are connected. Indeed, view $\mathbb{Z}^d$ as a $\mathbb{Z}$-module. Then for any generating $n$-tuple $(v_1,\dots,v_n)$, the vectors $v_1,\dots,v_n$ are linearly dependent. Without loss of generality let $v_1,\dots,v_d$ be a linearly independent set of vectors that generates $\mathbb{Z}^d$ and deduce that $(v_1,\dots,v_n)\sim (e_1,\dots,e_d,1,\dots,1)$ where $e_1,\dots,e_d$ is the standard basis for $\mathbb{Z}^d$.

For free nilpotent groups $F_{d,c}$ of rank $d$ and nilpotency class $c$, all automorphisms are tame when $c=1$ and $c=2$. If $c=1$ then $F_{d,1}=\mathbb{Z}^d$ and if $c=2$ then $F_{2,2}$ is the Heisenberg group $\mathcal{H}_1=\langle x,y\mid [x,[x,y]],[y,[x,y]]\rangle$ (see \cite{ Myro} for connectedness of Nielsen graphs of Heisenberg groups). It has been shown however that when $c\geq 3$, the group $\operatorname{Aut}F_{d,c}$ contains non-tame automorphisms \cite{Andreadakis,Bach}. In particular it can be shown that $N_2(F_{2,3})$ contains infinitely many connected components (see \cite{Myro}). 
On the other hand, Evans proved (\cite{Evans}) that if $G$ is a nilpotent group of rank $d$ then the Nielsen graph $N_n(G)$ is connected for all $n\geq d+1$.

For free (nilpotent of class $2$)-by-abelian groups $G_d=F_d/[F_d^{'},F_d^{'},F_d^{'}]$, Gupta and Levin \cite{GupL} proved that the group $\operatorname{Aut} G_4$ contains countably many non-tame automorphisms. Papistas \cite{Pap} extended their result to $d\geq 4$ and also showed that in the case $d=2$ and $d=3$ the group $\operatorname{Aut} G_d$ is not finitely generated. Therefore $N_2(G_2)$ and $N_3(G_3)$ have infinitely many connected components by Corollary \ref{rel.free.amen}.

For free metabelian groups $M_d=F_d/[\gamma_2(F_d),\gamma_2(F_d)]$, where $\gamma_2(F_d)$ is the second derived subgroup, Bachmuth and Mochizuki \cite{BM82, BM85} proved that $M_2$ and $M_d$, $d\geq 4$, have only tame automorphisms. However Chein \cite{Chei} showed that $M_3$ has non-tame automorphisms, and moreover $\operatorname{Aut}M_3$ is not finitely generated \cite{BM82}. Corollary \ref{rel.free.amen} then implies that there are infinitely many connected components in $N_3(M_3)$. 

For free centre-by-metabelian groups $G_d=F_d/[\gamma_2(F_d), F_d]$, St\"ohr \cite{Stoh} proved that $\operatorname{Aut} G_d$ is not finitely generated for $d=2$ and $d=3$, so we can again conclude by Corollary \ref{rel.free.amen} that there are infinitely many connected components of $N_d(G_d)$ for $d=2$ or $d=3$. For $d\geq 4$, the group $\operatorname{Aut} G_d$ is generated by tame automorphisms and at most one additional automorphism \cite{Stoh}, but the question whether all automorphisms are tame remains open.

%Suppose that $k\geq d$. Then we may regard the free group $F_d$ with basis $\{x_1,\dots,x_d\}$ as a subgroup of the free group $F_k$ with the basis $\{x_1,\dots,x_k\}$. If $\lambda$ is an automorphisms of a relatively free group $G$ of rank $d$ with respect to the verbal subgroup $\mathcal{V}$, we can ask whether, for some $k\geq d$, there exists an automorphism $\sigma$ of $F_k$ such that $\lambda$ is induced by $\sigma$. We say that if $g_1,\dots,g_m$ are distinct elements of $G$ such that $\{g_1,\dots,g_m\}$ is contained in a basis of $G$ then $(g_1,\dots,g_m)$ is called a \emph{primitive system} of $G$. If $(f_1,\dots,f_m)$ is a primitive system of $F_d$ then $(f_1,\dots,f_m)$ induces a primitive system $(f_1 V,\dots, f_m V)$. But, in general, not every primitive system of $G$ is induced by a primitive system of $F_d$. Bryant and Roman'kov \cite{BRom} showed that for a variety $\mathcal{R_c}$ of all group which are nilpotent of class at most $c$, and $k=2^d (d+1)+2c$, every primitive system of $F_d/V(F_d)$ is induced by some primitive system of $F_k$. 

\medskip
Let us now consider free Burnside groups $B(d,m)=F_d/F_d^m$ where $F_d^m$ is the verbal subgroup of $F_d$ generated by the law $x^m=1$, $m\geq 2$, $d\geq 2$. 
%These groups are torsion and so are not indicable and not covered by our Theorem \ref{indicable.amen}.

In \cite{Coul}, Coulon proves the following theorem.
\begin{thm}\cite{Coul}
Let $d\geq 3$. There exists an integer $m_0$ such that for all odd $m$ larger than $m_0$, the group $\operatorname{Out}B(d,m)$ of outer automorphisms of $B(d,m)$ contains a subgroup isomorphic to $F_2$. 
\end{thm}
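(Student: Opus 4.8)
This is an existence statement for a free subgroup of $\operatorname{Out}B(d,m)$, so the natural strategy is a ping-pong argument: I would exhibit two outer automorphisms together with attracting and repelling dynamics on some space on which $\operatorname{Out}B(d,m)$ acts, and conclude freeness from disjoint ping-pong neighborhoods. The first task is therefore to equip $B(d,m)$ with usable coarse geometry. Although $B(d,m)$ is itself an infinite torsion group and hence not hyperbolic, for $d\geq 2$ and odd $m$ large enough it is realized as a direct limit $F_d=G_0\twoheadrightarrow G_1\twoheadrightarrow\cdots\twoheadrightarrow B(d,m)$ of non-elementary \emph{hyperbolic} groups, obtained by the graded small-cancellation / cone-off machinery in which at each stage one kills the $m$-th powers of elements whose translation length exceeds a controlled threshold. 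Each $G_k$ carries a hyperbolic space $X_k$ with explicitly controlled constants, and this tower is the object on which I would do all the geometry.

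Since the verbal subgroup $F_d^m$ is fully invariant, $\operatorname{Aut}F_d$ acts on $B(d,m)$ and, more usefully, acts compatibly on the whole tower; this induces a homomorphism $\operatorname{Out}F_d\to\operatorname{Out}B(d,m)$. The plan is to start from a free subgroup $\langle\Phi,\Psi\rangle\cong F_2$ of $\operatorname{Out}F_d$ generated by two ``independent'' fully irreducible (atoroidal) outer automorphisms --- such pairs are abundant and have well-understood north--south dynamics on the boundary of Outer space --- and to prove that this $F_2$ injects into $\operatorname{Out}B(d,m)$. Equivalently, I must show that no nontrivial reduced word $w(\Phi,\Psi)$ becomes inner (or trivial) after passing to $B(d,m)$.

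\textbf{The dynamical core.} To detect non-innerness I would transport the north--south dynamics of $\Phi$ and $\Psi$ through the quotient tower onto the hyperbolic spaces $X_k$, obtaining, after replacing the generators by high powers $\Phi^N,\Psi^N$, attracting/repelling fixed configurations whose four basepoints are pairwise distinct and whose contraction estimates are \emph{uniform} in $k$. The large-exponent hypothesis is exactly what keeps the small-cancellation constants good enough for these estimates to survive in the limit, and this is where the genuine difficulty lies: one must bound, stage by stage, how much imposing the new relations distorts translation lengths, fixed points, and contraction, and show the distortion stays bounded as $k\to\infty$. With uniform north--south dynamics in hand, the classical ping-pong lemma gives $\langle\Phi^N,\Psi^N\rangle\cong F_2$, and the same dynamics certifies that every nontrivial word moves a suitable (conjugacy class of an) element of $B(d,m)$ non-innerly, so the copy of $F_2$ lands in $\operatorname{Out}B(d,m)$ rather than collapsing.

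The main obstacle is precisely this dynamical core. Because $B(d,m)$ is not hyperbolic one cannot simply invoke a boundary action and read off fixed points; instead one must work with the approximating hyperbolic quotients and prove that the relevant displacement and contraction data are uniform and pass to the limit. Controlling this requires the full periodic/hyperbolic geometry of Burnside groups (Ol'shanskii's graded approach, or the cone-off constructions of Delzant--Gromov and Coulon), and it is the reason the hypotheses $d\geq 3$ and $m$ odd and large are needed: enough rank to house two independent irreducible automorphisms, and a large odd exponent to make the small-cancellation control effective.
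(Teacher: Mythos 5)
First, a point of context: the paper does not prove this theorem at all --- it is imported verbatim from the cited reference \cite{Coul}, so there is no internal proof to compare against. What you have written is a plausible roadmap that does match the broad architecture of Coulon's argument: realize $B(d,m)$ as the direct limit of a tower of non-elementary hyperbolic quotients produced by iterated small-cancellation/cone-off constructions, use the full invariance of $F_d^m$ to push a subgroup of $\operatorname{Out}F_d$ down to $\operatorname{Out}B(d,m)$, start from a pair of atoroidal fully irreducible automorphisms generating an $F_2$ in $\operatorname{Out}F_d$ (this is where $d\geq 3$ genuinely enters: no automorphism of $F_2$ is atoroidal, since the conjugacy class of $[a,b]^{\pm1}$ is preserved), and certify survival of freeness by dynamical/ping-pong considerations. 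The paper's only added content beyond the citation is the remark that the resulting $F_2$ consists of tame automorphisms, which your setup also delivers since your $F_2$ is pulled back from $\operatorname{Out}F_d$.

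However, as a proof your proposal has a genuine and decisive gap, which you yourself flag: the entire ``dynamical core'' is asserted rather than carried out. Everything that makes the theorem hard is contained in the claim that the north--south dynamics of $\Phi$ and $\Psi$ (or, in the formulation actually used in the cited work, the hyperbolicity of the extension $F_d\rtimes F_2$ together with quantitative control of the invariants of the cone-off at each stage) descends through the quotient tower with estimates \emph{uniform} in $k$, so that no nontrivial word in $\Phi,\Psi$ becomes inner in the limit. You give no mechanism for this: no statement of the induction hypotheses on the hyperbolicity constants, injectivity radius, or translation lengths that must be propagated from $G_k$ to $G_{k+1}$, no argument that an automorphism acting with unbounded orbits on $X_k$ continues to do so on $X_{k+1}$, and no argument excluding that a word becomes inner at some finite stage even if it is non-inner on $F_d$. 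Without that, the step ``the classical ping-pong lemma gives $\langle\Phi^N,\Psi^N\rangle\cong F_2$'' in $\operatorname{Out}B(d,m)$ is unsupported --- it is precisely the content of the theorem. So the proposal should be read as a correct identification of the strategy of the cited paper, not as a proof.
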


It follows from Coulon's proof that the free subgroup that he finds in $\operatorname{Out}B(d,m)$ is in fact a subgroup of induced tame automorphisms. Indeed, the injective homomorphism $F_2\hookrightarrow \operatorname{Out} F_d/F_d^m$ that he constructs is induced by a homomorphism $F_2\rightarrow \operatorname{Out}F_d$. In particular we can conclude that $T(B(d,m))$ is nonamenable. 

On the other hand one can show that Burnside groups also possess non-tame automorphisms. Let us first consider the case of $d=2$ and suppose $m\geq 5$. Take, for example, an integer $q$ such that $q$ and $m$ are coprime, and $1<q^2<m-1$. Let $(x_1, x_2)$ be a generating set of $B(2,m)$. The map $x_1\rightarrow x_1^q$, $x_2\rightarrow x_2^q$ can be extended to an automorphism of $B(2,m)$ which is not tame (see \cite[Remark $0.2$]{MoSh} for details). Similarly for each $d\geq 2$ and for odd $m>2^d$ there are non-tame automorphisms of $B(d,m)$. 

The two parts of Corollary \ref{rel.free.amen} now imply Corollary \ref{nonamen.Burn.}. Namely, if $d\geq 2$ and $m> 2^d$ the Nielsen graph $N_d(B(d,m))$ is not connected. And for $d\geq 3$ and $m$ odd and large enough all connected components of $N_d(B(d,m))$ are isomorphic and nonamenable.

\bigskip
Section de Math\'ematiques, Universit\'e de Gen\`eve, 2-4 rue du Li\`evre, 1211 Gen\`eve, Switzerland.  

\vspace{0.5 cm}
\emph{Aglaia.Myropolska@unige.ch; Tatiana.Smirnova-Nagnibeda@unige.ch}

\end{document}